\newtheorem{theorem}{Theorem}[section]
\newtheorem{corollary}[theorem]{Corollary}
\newtheorem{lemma}[theorem]{Lemma}
\newtheorem{definition-theorem}[theorem]{Definition-Theorem}
\newtheorem{proposition}[theorem]{Proposition}
\newtheorem{mainthm}{Theorem}
\theoremstyle{definition}
\newtheorem{definition}[theorem]{Definition}
\newtheorem{remark}[theorem]{Remark}
\newtheorem{example}[theorem]{Example}
\numberwithin{equation}{theorem}
\newcommand{\add}{\mathsf{add}\hspace{.01in}}
\newcommand{\ind}{\mathsf{ind}\hspace{.01in}}
\renewcommand{\mod}{\mathsf{mod}\hspace{.01in}}
\newcommand{\proj}{\mathsf{proj}\hspace{.01in}}
\newcommand{\End}{\operatorname{End}\nolimits}
\newcommand{\gl}{\operatorname{gldim}\nolimits}
\newcommand{\Hom}{\operatorname{Hom}\nolimits}
\newcommand{\im}{\operatorname{Im}\nolimits}
\newcommand{\op}{\operatorname{op}\nolimits}
\newcommand{\pd}{\operatorname{pd}\nolimits}
\newcommand{\soc}{\operatorname{soc}\nolimits}
\renewcommand{\top}{\operatorname{top}\nolimits}
\begin{document}

\title[On an upper bound for the global dimension of ADR algebras]{On an upper bound for the global dimension of \\ Auslander--Dlab--Ringel algebras}

\author[Mayu Tsukamoto]{Mayu Tsukamoto}

\address{Department of Mathematics, Graduate School of Science, Osaka City University, 3-3-138 Sugimoto, Sumiyoshi-ku, Osaka 558-8585, Japan}
\curraddr{Graduate school of Sciences and Technology for Innovation, Yamaguchi University, 1677-1 Yoshida, Yamaguchi 753-8511, Japan}
\email{tsukamot@yamaguchi-u.ac.jp}

\subjclass[2010]{Primary 16G10; Secondary 16E10}

\keywords{Left-strongly quasi-hereditary algebras, Auslander--Dlab--Ringel algebras, Global dimension, Rejective subcategories}

%\date{\today}

\begin{abstract}
Lin and Xi introduced Auslander--Dlab--Ringel (ADR) algebras of semilocal modules as a generalization of original ADR algebras and showed that they are quasi-hereditary.
In this paper, we prove that such algebras are always left-strongly quasi-hereditary.
As an application, we give a better upper bound for global dimension of ADR algebras of semilocal modules. 
Moreover, we describe characterizations of original ADR algebras to be strongly quasi-hereditary.
\end{abstract}

\maketitle

\section*{Introduction}
Quasi-hereditary algebras were introduced by Cline, Parshall and Scott to study highest weight categories which arise in the representation theory of semisimple complex Lie algebras and algebraic groups \cite{{CPS}, {S}}. 
Dlab and Ringel intensely studied quasi-hereditary algebras from the viewpoint of the representation theory of artin algebras \cite{{DR2}, {DR}, {DR6}}. 

Motivated by Iyama's finiteness theorem, Ringel introduced the notion of left-strongly quasi-hereditary algebras in terms of highest weight categories \cite{R}. 
One of the advantages of left-strongly quasi-hereditary algebras is that they have better upper bound for global dimension than that of general quasi-hereditary algebras. 
Moreover, Ringel studied a special class of left-strongly quasi-hereditary algebras called strongly quasi-hereditary algebras. 

Let $A$ be an artin algebra with Loewy length $m$. 
In \cite{A}, Auslander studied the endomorphism algebra $B:=\End_{A}(\bigoplus_{j=1}^{m}A/J(A)^{j})$ and proved that $B$ has finite global dimension.
Furthermore, Dlab and Ringel showed that $B$ is a quasi-hereditary algebra \cite{DR3}. 
Hence $B$ is called an Auslander--Dlab--Ringel (ADR) algebra.
Recently, Conde gave a left-strongly quasi-hereditary structure on ADR algebras \cite{C}. 
Moreover, ADR algebras were studied in \cite{{C2}, {CEr}} and appeared in \cite{{Co}, {KK}}. 

In this paper, we study ADR algebras of semilocal modules introduced by Lin and Xi \cite{LX}.
Recall that a module $M$ is called semilocal if $M$ is a direct summand of modules which have a simple top.
Since any artin algebra is a semilocal module, the ADR algebras of semilocal modules are a generalization of the original ADR algebras. %the ADR algebra is a generalization of the original ADR algebra.
In \cite{LX}, they proved that ADR algebras of semilocal modules are quasi-hereditary.
We refine this result in Section \ref{ADR}.

\begin{mainthm} [Theorem \ref{thm1}] \label{thma}
The Auslander--Dlab--Ringel algebra of any semilocal module is left-strongly quasi-hereditary.
\end{mainthm}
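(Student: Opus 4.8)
The plan is to exhibit the quasi-hereditary structure of $B:=\End_A(N)$, where $N:=\bigoplus_{j=1}^{m}M/\rad^{j}M$ and $m$ is the Loewy length of $M$, through a rejective chain of $\add N$ coming from the radical filtration, and then to read off the projective dimensions of the standard modules from that chain. Throughout I use the functor $F:=\Hom_A(N,-)$, which restricts to an equivalence $\add N\xrightarrow{\sim}\proj B$ and is fully faithful on $\add N$; in particular an $A$-module $X$ lies in $\add N$ if and only if $F(X)$ is projective over $B$. Since an algebra is left-strongly quasi-hereditary exactly when all its standard modules have projective dimension at most one, and since Lin and Xi have already equipped $B$ with a quasi-hereditary order given by the radical layers, the entire statement reduces to the single assertion that $\pd_B\Delta\le 1$ for every standard $B$-module $\Delta$.

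First I would set up the combinatorics. Writing $M=\bigoplus_i M_i$ with each $M_i$ local (which is exactly the semilocal hypothesis), the indecomposable summands of $N$ are the radical quotients $M_i/\rad^{j}M_i$, and these index the indecomposable projective modules $P(i,j)=F(M_i/\rad^{j}M_i)$. I would order these pairs by Loewy layer and realise the associated heredity data by a chain of additive subcategories $\add N=\mathcal{C}_0\supseteq\mathcal{C}_1\supseteq\cdots\supseteq\mathcal{C}_n=0$ obtained by successively rejecting indecomposable summands, the rejections being governed by the canonical maps of the radical filtration. The reason for passing to such a chain is the bridge, which I would take from the earlier part of the paper, between rejective chains of $\add N$ and quasi-hereditary structures on $\End_A(N)$: a right rejective chain produces a quasi-hereditary structure in which the syzygy of each standard module is $F$ applied to the corresponding approximation subobject. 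Concretely, if at a given step the indecomposable $X=M_i/\rad^{j}M_i$ is rejected from a subcategory $\mathcal{C}$ and its minimal right $\mathcal{C}$-approximation is a monomorphism $C_X\hookrightarrow X$ with $C_X\in\mathcal{C}$, then applying the left-exact functor $F$ to $0\to C_X\to X\to X/C_X\to 0$ yields an exact sequence $0\to F(C_X)\to P(i,j)\to\Delta(i,j)\to 0$ which both identifies $\Delta(i,j)$ as the cokernel and shows $\Omega\Delta(i,j)\cong F(C_X)$. As $C_X\in\mathcal{C}\subseteq\add N$, the module $F(C_X)$ is projective, whence $\pd_B\Delta(i,j)\le 1$.

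Everything therefore hinges on verifying that the chain attached to the radical filtration is genuinely right rejective, i.e.\ that at each step the relevant right approximation is a monomorphism whose source lies in $\add N$; this is the step I expect to be the main obstacle. The heart of the matter is to control, for each rejected summand $X=M_i/\rad^{j}M_i$, the trace in $X$ of the objects already removed, and to arrange the chain so that this trace is again a direct sum of radical quotients of local modules and hence stays inside $\add N$. Here semilocality is indispensable: because $M_i$ is local, $X$ has simple top, so any homomorphism into $X$ from a module of strictly smaller Loewy length has image contained in $\rad X$ (otherwise it would be surjective, forcing $X$ to have too small a Loewy length). In Conde's case $M=A$ this control is transparent, since every simple is the top of a projective summand of $A$ and so every radical layer already belongs to $\add N$; for a general semilocal $M$ one no longer has this luxury, and the order in which indecomposables are rejected must be chosen with care so that the approximations remain monomorphic and keep their sources within $\add N$. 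Once the chain is shown to be rejective, the criterion from the earlier section applies and yields $\pd_B\Delta\le 1$ for all standard modules, proving that $B$ is left-strongly quasi-hereditary.
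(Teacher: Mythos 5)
Your plan runs in the wrong direction, and the step you yourself flag as ``the main obstacle'' is not merely left open --- it is false in general. In this paper, left-strong quasi-heredity is the property detected by \emph{total left rejective chains} of $\add\widetilde{M}$ (Proposition \ref{thm0}), i.e.\ by \emph{epic left} approximations. Accordingly, the paper's key computation (Lemma \ref{lem2}) shows that for a summand $M'$ of maximal Loewy length the canonical \emph{surjection} $M'\twoheadrightarrow M'/M'J(A)^{m-1}$ realizes $\mathcal{J}_{\mod A}(M',\widetilde{M})$; its target lies in $\add\widetilde{M}$ automatically, since $\widetilde{M}$ is by construction closed under these radical quotients. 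What you propose instead --- monic right approximations $C_X\hookrightarrow X$ with $C_X\in\add N$, so that $0\to F(C_X)\to F(X)\to \Delta\to 0$ has projective kernel --- is the mechanism for \emph{right}-strong quasi-heredity, the dual property. It forces certain \emph{submodules} (traces), rather than quotients, of $X$ to lie in $\add N$, and that is exactly what fails: for $M=A$ and $X=P(i)$ of maximal Loewy length, the trace in $X$ of the lower Loewy layers is $P(i)J(A)$, so your approximation would have to be $P(i)J(A)\hookrightarrow P(i)$ with $P(i)J(A)\in\add\widetilde{A}$. By Theorem \ref{thm2}, the condition $J(A)\in\add\widetilde{A}$ is equivalent to $B$ being strongly quasi-hereditary and to $\gl B=2$, whereas the example after Corollary \ref{cor} gives ADR algebras of global dimension $n-1$ for every $n$. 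Hence for the Loewy-layer order your chain provably cannot be made right rejective in general, and even where some exotic reordering yields a total right rejective chain, the conclusion would be right-strong (not left-strong) quasi-heredity under the conventions of this paper.

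Two further gaps. First, you cannot simply take over the Lin--Xi order and check projective dimensions of standard modules: their order is the length order, and the example following Remark \ref{rem} exhibits a semilocal $M$ whose ADR algebra is \emph{not} left-strongly quasi-hereditary for the length order; the order must be refined to the finer partition $\mathsf{F}_{i,j}$ of each Loewy layer by iterated rejection of summands admitting no radical surjection onto other members of the layer --- this refinement is where the semilocal hypothesis actually does its work, and your proposal never constructs it. Second, your correct observation that a map from a module of strictly smaller Loewy length into a local module $X$ lands in $\rad X$ is indeed the right kind of estimate, but it must be applied to maps \emph{out of} $X$ (to show $f(M'J(A)^{m-1})=0$ for every radical morphism $f\colon M'\to\widetilde{M}$, as in Lemma \ref{lem2}), not to maps into $X$. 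To repair the argument, replace the inclusions $C_X\hookrightarrow X$ throughout by the quotient maps $X\twoheadrightarrow X/XJ(A)^{m-i-1}$ for $X\in\mathsf{F}_{i,j}$ and invoke the left-rejective half of Propositions \ref{crrs} and \ref{thm0}; then no membership of submodules in $\add N$ is ever needed.
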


As an application, we give a tightly upper bound for global dimension of an ADR algebra (see Corollary \ref{cor}).

In Section \ref{SQHADR}, we study a connection between ADR algebras and strongly quasi-hereditary algebras.
An ADR algebra is a left-strongly quasi-hereditary algebra but not necessarily strongly quasi-hereditary.
We give characterizations of original ADR algebras to be strongly quasi-hereditary. 

\begin{mainthm} [Theorem \ref{thm2}] \label{thmb}
Let $A$ be an artin algebra with Loewy length $m \geq 2$ and $J$ the Jacobson radical of $A$.
Let $B:=\End_A(\bigoplus_{j=1}^{m} A/J^j)$ be the ADR algebra of $A$.
Then the following statements are equivalent.
\begin{itemize}
\item[{\rm (i)}] $B$ is a strongly quasi-hereditary algebra. 

\item[{\rm (ii)}] $\gl B =2$.

\item[{\rm (iii)}] $J \in \add (\bigoplus_{j=1}^{m} A/J^j)$. 
\end{itemize}
\end{mainthm}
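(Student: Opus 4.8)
The plan is to work throughout with the functor $\Hom_A(M,-)$, where $M=\bigoplus_{j=1}^m A/J^j$. Since $A=A/J^m$ is a direct summand of $M$, the module $M$ is a generator, so $\Hom_A(M,-)$ restricts to an equivalence $\add M \xrightarrow{\sim}\proj B$; under it the indecomposable summands $X_{\lambda,j}:=P_\lambda/J^jP_\lambda$ (with $P_\lambda$ the indecomposable projective at a simple $\lambda$, and $1\le j\le$ its Loewy length) index the simple $B$-modules $S_{\lambda,j}$. By Theorem \ref{thm1}, applied to the semilocal module $A$, the algebra $B$ is left-strongly quasi-hereditary, so its costandard modules satisfy $\id_B\nabla(\lambda,j)\le 1$; for such an algebra, being strongly quasi-hereditary is the additional requirement that every standard module satisfy $\pd_B\Delta(\lambda,j)\le 1$. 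I would prove the three statements equivalent through the cycle (i)$\Rightarrow$(ii)$\Rightarrow$(iii)$\Rightarrow$(i), using the explicit standard modules of $B$ recorded in Section \ref{ADR}.

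The computational heart is a description of the first syzygy of each simple. The projective cover of $S_{\lambda,j}$ is $\Hom_A(M,X_{\lambda,j})$ and its kernel is the radical, namely the space $\rad_A(M,X_{\lambda,j})$ of radical maps $M\to X_{\lambda,j}$, so that $\Omega_B S_{\lambda,j}\cong\rad_A(M,X_{\lambda,j})$. The essential subtlety — and the main obstacle — is that this radical of morphisms is strictly larger than $\Hom_A(M,\rad X_{\lambda,j})$: a summand of $M$ of larger Loewy length may surject onto $X_{\lambda,j}$, and such a surjection is radical although its image is all of $X_{\lambda,j}$. Thus $\Omega_B^2 S_{\lambda,j}$ is governed not by $\rad X_{\lambda,j}$ alone but by the kernels of these surjections, which are radical submodules of the form $J^kP_\mu$. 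I would make this precise by choosing, for each $X_{\lambda,j}$, a minimal right $\add M$-approximation and identifying its kernel; the point to establish is that these kernels lie in $\add M$ for every $\lambda,j$ precisely when each $\rad P_\lambda=JP_\lambda$ does, that is, precisely when $J\in\add M$.

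Granting this, the two structural implications follow. For (iii)$\Rightarrow$(i): if $J\in\add M$ then, using $JP_\lambda/J^jP_\lambda\cong \rad P_\lambda/J^{j-1}\rad P_\lambda$ and decomposing $\rad P_\lambda$, every relevant syzygy kernel is a direct sum of modules $P_\mu/J^kP_\mu\in\add M$; feeding this into the minimal projective presentations of the standard modules $\Delta(\lambda,j)$ yields $\pd_B\Delta(\lambda,j)\le 1$, which together with the left-strong structure of Theorem \ref{thm1} gives that $B$ is strongly quasi-hereditary. For (ii)$\Rightarrow$(iii) I would argue contrapositively: if $J\notin\add M$, some $\rad P_\lambda\notin\add M$, and the surjection from a top summand produces a simple $B$-module whose minimal $\add M$-resolution cannot terminate at the second step, so that $\pd_B S_{\lambda,j}\ge 3$ and $\gl B\ge 3$.

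Finally, (i)$\Rightarrow$(ii) combines the two homological inputs now available: $B$ is left-strongly quasi-hereditary, so $\id_B\nabla(\lambda,j)\le1$, and (i) gives $\pd_B\Delta(\lambda,j)\le1$; a filtration argument with the $\Delta$- and $\nabla$-filtrations of the indecomposable projective modules then bounds $\pd_B S_{\lambda,j}\le 2$, whence $\gl B\le 2$. The reverse inequality $\gl B\ge 2$ holds because $m\ge 2$ forces $B$ to be non-semisimple and non-hereditary. The delicate step throughout is the identification of the morphism-radical syzygies $\rad_A(M,X_{\lambda,j})$ with the radical layers $J^kP_\mu$, since it is this identification — not any formal property of quasi-hereditary algebras — that pins the global dimension to exactly $2$ under condition (iii).
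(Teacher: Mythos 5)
Your route is genuinely different from the paper's: the paper proves (iii)$\Rightarrow$(i) by upgrading the total left rejective chain of Proposition \ref{prop} to a rejective chain (via Lemma \ref{lem3} and Proposition \ref{crrs}) and invoking Proposition \ref{thm0}; it proves (i)$\Rightarrow$(ii) by citing \cite[Proposition A.2]{R} plus an explicit non-projective radical; and it outsources the equivalence of (ii) and (iii) entirely to \cite[Proposition 2]{Sm}. Your plan of computing syzygies of the simple $B$-modules directly is a reasonable alternative, and your identification $\Omega_B S_{\lambda,j}\cong\mathcal{J}_{\mod A}(\widetilde{A},X_{\lambda,j})$, together with the warning that this strictly contains $\Hom_A(\widetilde{A},\rad X_{\lambda,j})$, is correct and is the right starting point. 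Your (iii)$\Rightarrow$(i) is essentially the paper's argument in homological rather than categorical language (your decomposition of $JP_\lambda/J^jP_\lambda$ is Lemma \ref{lem3}), though you should check that the right-strong condition $\pd_B\Delta\le 1$ is verified for the \emph{same} partial order as the left-strong structure of Theorem \ref{thm1} --- the paper gets this for free because the single chain \eqref{rej} is shown to be both left and right rejective.

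The genuine gap is in (ii)$\Rightarrow$(iii). Arguing contrapositively, if $JP_\lambda\notin\add\widetilde{A}$ then, since $\widetilde{A}$ is a generator and $\Hom_A(\widetilde{A},-)$ is fully faithful, $\Omega_B S_\lambda\cong\Hom_A(\widetilde{A},JP_\lambda)$ is not projective --- but this only yields $\pd_B S_\lambda\ge 2$, which is perfectly compatible with $\gl B=2$. To conclude $\gl B\ge 3$ you must show that the \emph{second} syzygy $\Hom_A(\widetilde{A},\ker g)$, where $g\colon Y\to JP_\lambda$ is the minimal right $\add\widetilde{A}$-approximation, is again non-projective, i.e.\ that $\ker g\notin\add\widetilde{A}$; your phrase ``cannot terminate at the second step'' asserts exactly this but gives no argument, and it is not a formal consequence of $JP_\lambda\notin\add\widetilde{A}$. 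This is precisely the nontrivial content of Smal\o's proposition that the paper cites. A clean repair within your framework: since every $B$-module admits a projective presentation $\Hom_A(\widetilde{A},M_1)\xrightarrow{f\circ -}\Hom_A(\widetilde{A},M_0)$ whose kernel is $\Hom_A(\widetilde{A},\ker f)$, one has $\gl B\le 2$ if and only if $\add\widetilde{A}$ is closed under kernels of morphisms; applying this to the canonical surjection $A\to A/J(A)$, whose source and target both lie in $\add\widetilde{A}$, gives $J(A)\in\add\widetilde{A}$ immediately. Separately, your lower bound $\gl B\ge 2$ is asserted (``non-hereditary'') rather than proved; the paper establishes it by showing that $\mathcal{J}_{\mod A}(\widetilde{A},S)$ is not a projective $B$-module for a simple $A$-module $S$, and some such argument is needed.
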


It is known that if $B$ is strongly quasi-hereditary, then the global dimension of $B$ is at most two \cite[Proposition A.2]{R}.
We note that algebras with global dimension at most two are not necessarily strongly quasi-hereditary.
However, for original ADR algebras, the converse is also true.

\section{Preliminaries}
\medskip
\subsection*{Notation}
Let $A$ be an artin algebra, $J(A)$ the Jacobson radical of $A$ and $\mathrm{D}$ the Matlis dual. 
We denote by $\gl A$ the global dimension of $A$.
We fix a complete set of representatives of isomorphism classes of simple $A$-modules $\{ S(i) \; | \; i \in I \}$.
We denote by $P(i)$ the projective cover of $S(i)$ and $E(i)$ the injective hull of $S(i)$ for any $i \in I$. 

We write $\mod A$ for the category of finitely generated right $A$-modules and $\proj A$ for the full subcategory of $\mod A$ consisting of finitely generated projective $A$-modules. 
For $M \in \mod A$, we denote by $\add M$ the full subcategory of $\mod A$ whose objects are direct summands of finite direct sums of $M$.

The composition of two maps $f:X \to Y$ and $g: Y \to Z$ is denoted by $g \circ f$.
For a quiver $Q$, we denote by $\alpha \beta$ the composition of two arrows $\alpha: x \to y$ and $\beta: y \to z$ in $Q$.

We denote by $K$ an algebraically closed field.
\medskip

In this section, we quickly review a relationship between strongly quasi-hereditary algebras and rejective chains.
For more detail, we refer to \cite{{I2}, {T}}.

We start this section with recalling the definition of left-strongly quasi-hereditary algebras.
Let $\leq$ be a partial order on the index set $I$ of simple $A$-modules.
For each $i \in I$, we denote by $\nabla(i)$ the maximal submodule of $E(i)$ whose composition factors have the form $S(j)$ for some $j \leq i$.
The module $\nabla(i)$ is called the {\it costandard module} corresponding to $i$.
Let $\nabla:= \{ \nabla(i) \; | \; i \in I \}$ be the set of costandard modules.
We denote by $\mathcal{F}(\nabla)$ the full subcategory of $\mod A$ whose objects are the modules which have a $\nabla$-filtration, that is, $M \in \mathcal{F}(\nabla)$ if and only if there exists a chain of submodules 
\[
M=M_0 \supseteq M_1 \supseteq \cdots \supseteq M_l=0
\]
such that $M_i/M_{i+1}$ is isomorphic to a module in $\nabla$. 
For $M \in \mathcal{F}(\nabla)$, we denote by $(M: \nabla(i))$ the filtration multiplicity of $\nabla(i)$, which dose not depend on the choice of $\nabla$-filtrations.

\begin{definition}[{\cite[\S 4]{R}}]
Let $A$ be an artin algebra and $\leq$ a partial order on $I$.
\begin{itemize}
\item[{\rm (1)}] A pair $(A, \leq)$ (or simply $A$) is called {\it left-strongly quasi-hereditary} if there exists a short exact sequence
\begin{equation*}
0 \to \nabla(i) \to E(i) \to E(i) /\nabla(i) \to 0  
\end{equation*}
for any $i \in I$ with the following properties:
\begin{enumerate}
\item [{\rm (a)}] $E(i)/\nabla(i) \in \mathcal{F}(\nabla)$ for any $i \in I$;  
\item [{\rm (b)}] if $(E(i)/\nabla(i) :\nabla(j)) \not= 0$, then we have $i < j$;
\item [{\rm (c)}] $E(i)/\nabla(i)$ is an injective $A$-module, or equivalently, $\nabla(i)$ has injective dimension at most one.
\end{enumerate}
\item[{\rm (2)}] We say that a pair $(A, \leq)$ (or simply $A$) is {\it right-strongly quasi-hereditary} if $(A^{\op}, \leq)$ is left-strongly quasi-hereditary. 
\item[{\rm (3)}] We say that a pair $(A, \leq)$ (or simply $A$) is {\it strongly quasi-hereditary} if $(A, \leq)$ is left-strongly quasi-hereditary and right-strongly quasi-hereditary. 
\end{itemize}
\end{definition}

By definition, strongly quasi-hereditary algebras are left-strongly quasi-hereditary algebras. 
Since a pair $(A, \leq)$ satisfying the conditions (a) and (b) is a quasi-hereditary algebra, left-strongly quasi-hereditary algebras are quasi-hereditary.   

Left-strongly (resp.\ right-strongly) quasi-hereditary algebras are characterized by total left (resp.\ right) rejective chains, which are chains of certain left (resp.\ right) rejective subcategories.
We recall the notion of left (resp.\ right) rejective subcategories. 
Let $\mathcal{C}$ be an additive category, and put $\mathcal{C}(X, Y):=\Hom_{\mathcal{C}}(X,Y)$. 
In this section, {\it we assume that any subcategory is full and closed under isomorphisms, direct sums and direct summands.}

\begin{definition} [{\cite[2.1(1)]{I}}] \label{rejsub}
Let $\mathcal{C}$ be an additive category.
A subcategory $\mathcal{C}'$ of $\mathcal{C}$ is called 
\begin{itemize}
\item[{\rm (1)}] a \emph{left $($resp.\ right$)$ rejective subcategory} of $\mathcal{C}$ if, for any $X\in\mathcal{C}$, there exists an epic left $($resp.\ monic right$)$ $\mathcal{C}'$-approximation $f^X \in \mathcal{C}\left(X,Y\right)$ $($resp.\ $f_X \in \mathcal{C}\left(Y,X\right))$ of $X$, 
\item[{\rm (2)}] a \emph{rejective subcategory} of $\mathcal{C}$ if $\mathcal{C}'$ is a left and right rejective subcategory of $\mathcal{C}$.
\end{itemize}
\end{definition}

To define a total left (resp.\ right) rejective chain, we need the notion of cosemisimple subcategories. 
Let $\mathcal{J}_{\mathcal{C}}$ be the Jacobson radical of $\mathcal{C}$.
For a subcategory $\mathcal{C}'$ of $\mathcal{C}$, we denote by $[\mathcal{C}']$ the ideal of $\mathcal{C}$ consisting of morphisms which factor through some object of $\mathcal{C}'$, and by $\mathcal{C}/[\mathcal{C}']$ the factor category (\emph{i.e.}, $\mathit{ob}(\mathcal{C}/[\mathcal{C}']):=\mathit{ob}(\mathcal{C})$ and $(\mathcal{C}/[\mathcal{C}'])(X,Y):= \mathcal{C}(X,Y)/[\mathcal{C}'](X,Y)$ for any $X, Y \in \mathcal{C}$). 
Recall that an additive category $\mathcal{C}$ is called a {\it Krull--Schmidt} category if any object of $\mathcal{C}$ is isomorphic to a finite direct sum of objects whose endomorphism rings are local. 
We denote by $\ind \mathcal{C}$ the set of isoclasses of indecomposable objects in $\mathcal{C}$. 

\begin{definition}
Let $\mathcal{C}$ be a Krull--Schmidt category.
A subcategory $\mathcal{C}'$ of $\mathcal{C}$ is called {\it cosemisimple} in $\mathcal{C}$ if $\mathcal{J}_{\mathcal{C}/[\mathcal{C'}]}=0$ holds.
\end{definition}

We give a characterization of cosemisimple left rejective subcategories.

\begin{proposition} [{\cite[1.5.1]{I2}}] \label{crrs}
Let $\mathcal{C}$ be a Krull--Schmidt category and let $\mathcal{C}'$ be a subcategory of $\mathcal{C}$. 
Then $\mathcal{C}'$ is a cosemisimple left $($resp.\ right$)$ rejective subcategory of $\mathcal{C}$ if and only if, for any $X \in \ind \mathcal{C} \setminus \ind \mathcal{C}'$,
there exists a morphism $\varphi: X \to Y$ $($resp.\ $\varphi : Y \to X)$ such that $Y \in \mathcal{C}'$ and $\mathcal{C}(Y, -) \xrightarrow{-\circ \varphi} \mathcal{J}_{\mathcal{C}}(X, -)$ $($resp.\ $\mathcal{C}(-, Y) \xrightarrow{\varphi \circ -} \mathcal{J}_{\mathcal{C}}(-, X))$ is an isomorphism on $\mathcal{C}$. 
\end{proposition}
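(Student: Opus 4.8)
The plan is to route the whole equivalence through a single bridge, namely the claim that $\mathcal{C}'$ is cosemisimple if and only if $\mathcal{J}_{\mathcal{C}} \subseteq [\mathcal{C}']$, i.e.\ every radical morphism of $\mathcal{C}$ factors through an object of $\mathcal{C}'$. I would prove the left version in full and obtain the right version by applying it to $\mathcal{C}^{\op}$, since $[\mathcal{C}']$, $\mathcal{J}_{\mathcal{C}}$ and the notions of epic/monic approximation all dualize.

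First I would record two elementary observations about an indecomposable $X \in \ind\mathcal{C}\setminus\ind\mathcal{C}'$. Because $\mathcal{C}'$ is closed under direct summands, no morphism $X \to C$ with $C \in \mathcal{C}'$ can split off an isomorphic copy of $X$, so every such morphism is radical; that is, $\mathcal{C}(X, C) = \mathcal{J}_{\mathcal{C}}(X, C)$ for all $C \in \mathcal{C}'$. In particular any left $\mathcal{C}'$-approximation $\varphi \colon X \to Y$ satisfies $\varphi \in \mathcal{J}_{\mathcal{C}}(X, Y)$, so the map $g \mapsto g\circ\varphi$ genuinely takes values in $\mathcal{J}_{\mathcal{C}}(X, -)$. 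Second, $\varphi$ is an epimorphism exactly when $g \mapsto g\circ\varphi$ is injective on every hom-set; thus the injectivity half of the asserted isomorphism is precisely the demand that the approximation be \emph{epic}.

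The technical heart is the bridge equivalence. For the forward direction, the quotient functor $\pi \colon \mathcal{C} \to \mathcal{C}/[\mathcal{C}']$ is full, hence carries the radical into the radical: if $f \in \mathcal{J}_{\mathcal{C}}$ then $1 - \pi(g)\pi(f) = \pi(1 - gf)$ is invertible for every $\pi(g)$, so $\pi(\mathcal{J}_{\mathcal{C}}) \subseteq \mathcal{J}_{\mathcal{C}/[\mathcal{C}']} = 0$, giving $\mathcal{J}_{\mathcal{C}} \subseteq [\mathcal{C}']$. For the converse, $\mathcal{J}_{\mathcal{C}} \subseteq [\mathcal{C}']$ lets $\pi$ factor through the semisimple category $\mathcal{C}/\mathcal{J}_{\mathcal{C}}$; a hom-space computation on indecomposables then shows that the nonzero objects of $\mathcal{C}/[\mathcal{C}']$ are exactly the $X \notin \mathcal{C}'$, that $[\mathcal{C}'](X, Y)$ coincides with $\mathcal{J}_{\mathcal{C}}(X, Y)$ on them (using $\operatorname{rad}\End_{\mathcal{C}}(X)$ on the diagonal), and hence that $\mathcal{C}/[\mathcal{C}']$ is semisimple with vanishing radical.

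With the bridge established I would close both directions. Assume $\mathcal{C}'$ is a cosemisimple left rejective subcategory and fix $X \in \ind\mathcal{C}\setminus\ind\mathcal{C}'$; take an epic left $\mathcal{C}'$-approximation $\varphi \colon X \to Y$. Injectivity of $g \mapsto g\circ\varphi$ is exactly that $\varphi$ is epic, and for surjectivity onto $\mathcal{J}_{\mathcal{C}}(X, Z)$ I would take $h \in \mathcal{J}_{\mathcal{C}}(X, Z) \subseteq [\mathcal{C}'](X, Z)$, write $h = b\circ a$ with $a \colon X \to C$, $C \in \mathcal{C}'$, factor $a = c\circ\varphi$ through the approximation, and get $h = (b\circ c)\circ\varphi$. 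Conversely, given such a $\varphi$ for every $X$, injectivity makes $\varphi$ epic and the isomorphism onto the radical makes it a left approximation (any $g \colon X \to C \in \mathcal{C}'$ lies in $\mathcal{J}_{\mathcal{C}}(X, C) = \operatorname{im}(-\circ\varphi)$, so factors through $\varphi$); assembling these over a direct-sum decomposition, and using the identity on summands in $\mathcal{C}'$, yields epic left $\mathcal{C}'$-approximations of all objects, while $\mathcal{J}_{\mathcal{C}}(X, -) = \operatorname{im}(-\circ\varphi) \subseteq [\mathcal{C}'](X, -)$ forces $\mathcal{J}_{\mathcal{C}} \subseteq [\mathcal{C}']$ and hence cosemisimplicity. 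The main obstacle I anticipate is the bridge equivalence, and within it the verification that the radical of the factor category $\mathcal{C}/[\mathcal{C}']$ actually vanishes; the remaining steps are formal manipulations of approximations and of the ideal $[\mathcal{C}']$.
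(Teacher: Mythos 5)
The paper does not prove this proposition; it is quoted verbatim from Iyama's work \cite[1.5.1]{I2}, so there is no in-paper argument to compare against. Judged on its own, your proof is correct and follows the standard route (essentially Iyama's): the bridge ``$\mathcal{C}'$ cosemisimple $\Leftrightarrow$ $\mathcal{J}_{\mathcal{C}}\subseteq[\mathcal{C}']$'' is the right pivot, and both implications you derive from it are sound. The two points that carry the weight are exactly the ones you isolate: (1) for $X\in\ind\mathcal{C}\setminus\ind\mathcal{C}'$ and $C\in\mathcal{C}'$ one has $\mathcal{C}(X,C)=\mathcal{J}_{\mathcal{C}}(X,C)$ because $\mathcal{C}'$ is closed under direct summands (so a non-radical morphism out of the indecomposable $X$ would split $X$ off $C$), which simultaneously guarantees that $-\circ\varphi$ lands in $\mathcal{J}_{\mathcal{C}}(X,-)$ and that surjectivity onto $\mathcal{J}_{\mathcal{C}}(X,C)=\mathcal{C}(X,C)$ is the approximation property; and (2) in the converse of the bridge, $[\mathcal{C}'](X,X)$ is a proper ideal of the local ring $\End_{\mathcal{C}}(X)$ (else $X$ would be a summand of an object of $\mathcal{C}'$), hence equals $\rad\End_{\mathcal{C}}(X)$, which is what makes the factor category have zero radical. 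The reduction of the approximation property from indecomposables to arbitrary objects via direct sums, and the dualization to the right-hand statement, are routine as you say. No gaps.
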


Now, we introduce the following key notion in this paper.

\begin{definition}[{\cite[2.1(2)]{I}}] \label{rejch}
Let $\mathcal{C}$ be a Krull--Schmidt category.
A chain 
\begin{equation*} 
\mathcal{C}= \mathcal{C}_0 \supset \mathcal{C}_1 \supset \cdots \supset \mathcal{C}_n =0
\end{equation*}
of subcategories of $\mathcal{C}$ is called 
\begin{itemize}
\item[{\rm (1)}] a \emph{rejective chain} if $\mathcal{C}_i$ is a cosemisimple rejective subcategory of $\mathcal{C}_{i-1}$ for $1 \leq i \leq n$,

\item[{\rm (2)}] a \emph{total left $(${\rm resp.}\ right$)$ rejective chain} if the following conditions hold for $1 \leq i \leq n$:
\begin{enumerate}
\item[(a)] $\mathcal{C}_i$ is a left (resp.\ right) rejective subcategory of $\mathcal{C}$;
\item[(b)] $\mathcal{C}_{i}$ is a cosemisimple subcategory of $\mathcal{C}_{i-1}$.
\end{enumerate}
\end{itemize}
\end{definition}

The following proposition gives a connection between left-strongly quasi-hereditary algebras and total left rejective chains.

\begin{proposition} [{\cite[Theorem 3.22]{T}}] \label{thm0}
Let $A$ be an artin algebra.
Let $M$ be a right $A$-module and $B:= \End_A(M)$. 
Then the following conditions are equivalent. 
\begin{itemize}
\item[{\rm (i)}] $B$ is a left-strongly $($resp.\ right-strongly$)$ quasi-hereditary algebra.

\item[{\rm (ii)}] $\proj B$ has a total left $($resp.\ right$)$ rejective chain. 

\item[{\rm (iii)}] $\add M$ has a total left $($resp.\ right$)$ rejective chain.
\end{itemize}

In particular, $B$ is strongly quasi-hereditary if and only if $\add M$ has a rejective chain.
\end{proposition}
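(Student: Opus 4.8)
The plan is to prove the chain of equivalences in two stages: the purely formal equivalence (ii) $\Leftrightarrow$ (iii), and the substantive equivalence (i) $\Leftrightarrow$ (ii) which requires translating the rejective-chain data into the costandard modules of $B$. I would treat the left case throughout, obtaining the right case by passing to $A^{\op}$ and $B^{\op}$ and the final assertion by running both simultaneously. For (ii) $\Leftrightarrow$ (iii) I would first record that $\Hom_A(M,-)$ restricts to an equivalence $\add M \xrightarrow{\sim} \proj B$ of Krull--Schmidt categories carrying $M$ to the regular module $B$. All the notions entering Definitions \ref{rejsub} and \ref{rejch} --- left/right approximations, epimorphisms and monomorphisms, the categorical radical $\mathcal{J}$, cosemisimplicity, and hence total left (resp.\ right) rejective chains --- are invariants of the additive category alone, so they are transported and reflected by this equivalence, giving (ii) $\Leftrightarrow$ (iii) at once.

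For (i) $\Leftrightarrow$ (ii) I would set up the dictionary between summand-closed subcategories of $\proj B$ and idempotents: a chain $\proj B = \mathcal{C}_0 \supset \cdots \supset \mathcal{C}_n = 0$ is the same datum as a descending family $1 = e_0, e_1, \dots, e_n = 0$ of idempotents with $\mathcal{C}_k = \add(e_k B)$, hence a filtration of $B$ by the idempotent ideals $B e_k B$. I would then match the two conditions of a total left rejective chain with the two ingredients of a left-strongly quasi-hereditary structure. Cosemisimplicity of $\mathcal{C}_k$ in $\mathcal{C}_{k-1}$ (Definition \ref{rejch}(2)(b)) should translate into the statement that the layer removed at step $k$ is of heredity type, i.e.\ that the diagonal algebra $(e_{k-1}-e_k)B(e_{k-1}-e_k)$ becomes semisimple modulo $\mathcal{C}_k$; this produces the quasi-hereditary order, the directedness (b), and the filtration multiplicities. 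Left rejectivity of $\mathcal{C}_k$ in $\proj B$ (condition (a)) should translate, via the epic approximation $P_B(x)\to Y$ with $Y\in\mathcal{C}_k$ and its kernel, into the assertion that the costandard module $\nabla(x)$ attached to the summand removed at step $k$ has a two-term injective copresentation, that is $\id\nabla(x)\le 1$ together with $E(x)/\nabla(x)\in\mathcal{F}(\nabla)$ --- exactly Ringel's (a) and (c).

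Concretely, from a total left rejective chain I would construct the costandard modules and the order by induction on the length $n$: assign to the summand removed at step $k$ an order in which later steps are smaller, read off $\nabla(x)$ from the epic $\mathcal{C}_k$-approximation under the equivalence $\Hom_A(M,-)$, and verify (a), (b), (c). The induction is organised through the recollement attached to $e_1$, relating $\mod B$, $\mod B/Be_1B$ and $\mod e_1Be_1$, which reduces the statement for $B$ to a smaller algebra. Conversely, a left-strongly quasi-hereditary structure lets me define $\mathcal{C}_k$ as the subcategory generated by the projectives indexed by the largest $k$ elements of the order, where condition (c) forces the approximations to be epic and condition (b) yields cosemisimplicity.

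The step I expect to be the main obstacle is precisely this homological matching: proving that an epic left approximation in $\proj B$ corresponds to a costandard module of injective dimension at most one. The delicate point is the variance --- left-strong quasi-heredity is phrased on the injective/costandard (socle) side, whereas left rejectivity is phrased on the epimorphism (top) side --- which must be reconciled by tracking $\Hom_A(M,-)$ against the injective side of $\mod B$ under the Matlis duality $\mathrm{D}$. Once this is in place the ``in particular'' clause is immediate: by the composition property of left and of right rejective subcategories, a two-sided rejective chain is simultaneously a total left and a total right rejective chain, so $\add M$ having a rejective chain yields a left- and right-strongly, hence strongly, quasi-hereditary $B$, while conversely the compatible left and right data of a strongly quasi-hereditary structure share a common underlying chain that is rejective.
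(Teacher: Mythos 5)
You should first note that the paper does not prove this proposition at all: it is imported verbatim from \cite[Theorem 3.22]{T}, so there is no in-paper argument to measure your proposal against. Judged on its own terms, your treatment of (ii) $\Leftrightarrow$ (iii) is fine and complete in essence: $\Hom_A(M,-)\colon \add M \to \proj B$ is an equivalence of Krull--Schmidt categories, and every ingredient of Definition \ref{rejch}(2) (left/right approximations, categorical epis and monos, the radical $\mathcal{J}$, cosemisimplicity) is invariant under such an equivalence. Note, though, that this only works because a \emph{total} left rejective chain uses categorical epimorphisms; the distinction the paper draws with $A$-total chains (epic in $\mod A$) shows this is not a vacuous point, and it is worth saying explicitly that you are using the categorical notion.

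The genuine gap is in (i) $\Leftrightarrow$ (ii), which is the entire content of the theorem and which your proposal plans rather than proves. The step you yourself flag as ``the main obstacle'' --- showing that an epic left $\mathcal{C}_k$-approximation $P_B(x)\to Y$ in $\proj B$, transported through the Matlis dual to the injective side of $\mod B$, yields exactly the sequence $0\to\nabla(x)\to E(x)\to E(x)/\nabla(x)\to 0$ with $E(x)/\nabla(x)$ injective and $\nabla$-filtered by strictly larger indices, and conversely --- is the theorem; deferring it leaves the proof not given. Two further points are asserted where an argument is needed. First, you attribute the heredity-type behaviour of each layer to cosemisimplicity alone, but cosemisimplicity of $\mathcal{C}_k$ in $\mathcal{C}_{k-1}$ by itself does not make $Be_kB$ a heredity-like ideal; it is the conjunction with rejectivity (via Proposition \ref{crrs}, which converts the approximation into an isomorphism $\mathcal{C}(Y,-)\cong\mathcal{J}_{\mathcal{C}}(X,-)$) that controls the layer. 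Second, in the ``in particular'' clause the implication from strongly quasi-hereditary to a single two-sided rejective chain needs the facts that the left and right structures share one partial order (as in Definition 1.1(3)) and that a subcategory which is both left and right rejective in $\mathcal{C}$ is cosemisimple rejective in each intermediate $\mathcal{C}_{i-1}$; ``share a common underlying chain'' is stated, not derived. The converse direction does follow from the composition lemma for rejective subcategories as you say.
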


We end this section with recalling a special total left rejective chain, which plays an important role in this paper.
 
\begin{definition}[{\cite[Definition 2.2]{I2}}]
Let $A$ be an artin algebra and $\mathcal{C}$ a subcategory of $\mod A$.
A chain 
\begin{equation*}
\mathcal{C}= \mathcal{C}_0 \supset \mathcal{C}_1 \supset \cdots \supset \mathcal{C}_n =0
\end{equation*}
of subcategories of $\mathcal{C}$ is called an \emph{$A$-total left $(${\rm resp.}\ right$)$ rejective chain of length $n$} if the following conditions hold for $1 \leq i \leq n$:
\begin{enumerate}
\item[(a)] for any $X \in \mathcal{C}_{i-1}$, there exists an epic $(${\rm resp.}\ monic$)$ in $\mod A$ left $(${\rm resp.}\ right$)$ $\mathcal{C}_i$-approximation of $X$;
\item[(b)] $\mathcal{C}_{i}$ is a cosemisimple subcategory of $\mathcal{C}_{i-1}$.
\end{enumerate}
\end{definition}

All $A$-total left rejective chains of $\mathcal{C}$ are total left rejective chains. 
Moreover, If $\mathrm{D}A \in \mathcal{C}$, then the converse also holds.

We can give an upper bound for global dimension by using $A$-total left rejective chains.

\begin{proposition}[{\cite[Theorem 2.2.2]{I2}}] \label{iygl} 
Let $A$ be an artin algebra and $M$ a right $A$-module. 
If $\add M$ has an $A$-total left $($resp.\ right$)$ rejective chain of length $n>0$, then $\gl \End_A(M) \leq n$ holds.
\end{proposition}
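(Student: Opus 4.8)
The plan is to prove the left statement; the right one then follows by applying it to $A^{\op}$. Write $B=\End_A(M)$. Since the left and right global dimensions of $B$ coincide, I will compute $\gl B$ inside the category of finitely presented covariant functors on $\add M$, which is equivalent to the category of (left) $B$-modules and matches the variance built into the notion of a left rejective subcategory. In this description the indecomposable projectives are the representable functors $\C_0(X,-)$ with $X\in\ind\C_0$, where $\C_0:=\add M$, and the simple objects are $S_X=\C_0(X,-)/\mathcal{J}_{\C_0}(X,-)$, whose first syzygy is the radical functor $\mathcal{J}_{\C_0}(X,-)$. As $\gl B=\sup_{X\in\ind\C_0}\pd S_X$, it suffices to bound each $\pd S_X$ by $n$, and I would obtain this by isolating one comparison step per inclusion $\C_i\subset\C_{i-1}$ and telescoping.

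First I would establish a one-step inequality. Fix $1\le i\le n$, abbreviate $\M=\C_{i-1}$, $\N=\C_i$, let $M_{i-1},M_i$ be additive generators of $\M,\N$, and set $B_{i-1}=\End_A(M_{i-1})$, $B_i=\End_A(M_i)$; I claim $\gl B_{i-1}\le\gl B_i+1$. Conditions (a) and (b) say exactly that $\N$ is a cosemisimple left rejective subcategory of $\M$ admitting left $\N$-approximations epic in $\mod A$, so Proposition \ref{crrs} applies with ambient category $\M$. For a new indecomposable $X\in\ind\M\setminus\ind\N$ it produces $\varphi\colon X\to Y$ with $Y\in\N$ inducing an isomorphism $\M(Y,-)\xrightarrow{-\circ\varphi}\mathcal{J}_\M(X,-)$; hence the first syzygy of $S_X$ is the representable, that is projective, functor $\M(Y,-)$, giving $\pd S_X\le 1$. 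For an old indecomposable $X\in\ind\N$ I would instead compare the resolution of $S_X$ over $B_{i-1}$ with that of the corresponding simple $B_i$-module, showing $\pd_{B_{i-1}}S_X\le\pd_{B_i}(\text{its }\N\text{-counterpart})+1$. Taking suprema yields $\gl B_{i-1}\le\max(1,\gl B_i+1)=\gl B_i+1$.

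Then I iterate. The bottom inclusion $\C_{n-1}\supset\C_n=0$ forces $\C_{n-1}$ to be a semisimple category, so $B_{n-1}$ is semisimple and $\gl B_{n-1}=0$. Telescoping the one-step inequalities along the chain gives $\gl B=\gl B_0\le\gl B_{n-1}+(n-1)\le n$, as required; here the hypothesis $n>0$ guarantees the chain is nontrivial.

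The main obstacle is the old-indecomposable case of the one-step inequality, namely bounding $\pd_{B_{i-1}}S_X$ for $X\in\N$ in terms of the projective dimension of the corresponding $B_i$-simple. The difficulty is that the first syzygy $\mathcal{J}_\M(X,-)$ records radical maps out of $X$ into \emph{all} of $\M$, not only into $\N$, so Proposition \ref{crrs} does not directly make it projective; one must instead manufacture an exact functor between the two module categories out of the left $\N$-approximations and verify that it raises projective dimension by at most one. This is precisely where the requirement that the approximations be epic in $\mod A$ is used: it keeps the approximation sequences $0\to\ker\varphi\to X\to Y\to 0$ short exact and respected by the comparison, so that a projective $B_i$-resolution of the $\N$-counterpart lifts, after prepending one term, to a $B_{i-1}$-resolution of $S_X$. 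The remaining bookkeeping, namely reducing from additive generators to indecomposables and tracking the covariant variance forced by the word ``left'', I expect to be routine.
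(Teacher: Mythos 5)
There is a genuine gap, and it sits exactly where you flagged the ``main obstacle'': the one-step inequality $\gl B_{i-1}\le \gl B_i+1$ is false, so the telescoping collapses. Concretely, take $A=K[x]/(x^2)$ and $M=A\oplus A/J(A)$, with the chain $\add M=\C_0\supset \C_1=\add (A/J(A))\supset \C_2=0$ of length $2$ (this is the original ADR setting of Section 3 with $m=2$). Here $B_1=\End_A(A/J(A))=K$ is semisimple, while $B_0$ is the Auslander algebra of $K[x]/(x^2)$, whose global dimension is $2$, not $\le \gl B_1+1=1$. The failure is precisely your ``old indecomposable'' step: for $X=A/J(A)\in\ind\C_1$, the simple covariant functor $S_X$ has $\pd_{B_1}S_X=0$ but $\pd_{B_0}S_X=2$, because its syzygy $\mathcal{J}_{\C_0}(X,-)$ is nonzero on $A\notin\C_1$, its projective cover is $\C_0(A,-)$, and the kernel of that cover is a nonzero projective. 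So no exact comparison functor raising projective dimension by at most one can exist: the inequality it would produce is false. A quick sanity check would have caught this: your telescoped estimate is really $\gl B\le \gl B_{n-1}+(n-1)=n-1$, which contradicts the paper itself, since by Theorem \ref{thm2} any non-semisimple $A$ with $J(A)^2=0$ has an ADR algebra of global dimension exactly $2$ while its chain has length $2$.

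What is sound is your treatment of the top layer: for $X\in\ind\C_0\setminus\ind\C_1$, Proposition \ref{crrs} gives $\mathcal{J}_{\C_0}(X,-)\cong\C_0(Y,-)$, hence $\pd_B S_X\le 1$. The paper does not reprove the proposition (it cites \cite[Theorem 2.2.2]{I2}), but the correct argument keeps the ambient algebra $B=B_0$ fixed throughout and shows by induction along the chain that $\pd_B S_X\le i$ for $X\in\ind\C_{i-1}\setminus\ind\C_i$. This uses condition (a) of Definition \ref{rejch}(2) -- each $\C_i$ is left rejective in all of $\C$, not merely in $\C_{i-1}$ -- together with the short exact sequences $0\to Z\to X\to Y\to 0$ coming from the epic-in-$\mod A$ approximations, in order to control the values of the syzygy functors on objects of $\C_0$ lying outside $\C_i$. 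That is exactly the information you discard when you replace $B_{i-1}$ by $B_i$, and it is what accounts for the growth of projective dimension in the deeper layers.
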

 
\section{ADR algebras of semilocal modules} \label{ADR}
The aim of this section is to show Theorem \ref{thma}.
First, we recall the definition of semilocal modules.

\begin{definition}
Let $M$ be an $A$-module. 

\begin{itemize}
\item[(1)] $M$ is called a {\it local} module if $\top M$ is isomorphic to a simple $A$-module.
\item[(2)] $M$ is called a {\it semilocal} module if $M$ is a direct sum of local modules.
\end{itemize}
\end{definition}
 
Clearly, any local module is indecomposable and any projective module is semilocal.
 
Throughout this section, suppose that $\displaystyle M$ is a semilocal module with Loewy length $\ell \ell (M)=m$.
We denote by $\widetilde{M}$ the basic module of $\oplus_{i=1}^{m} M/MJ(A)^i$ and call $\End_A(\widetilde{M})$ the {\it Auslander--Dlab--Ringel algebra} (ADR algebra) of $M$. 
Note that $\End_A(\widetilde{A})$ is an ADR algebra in the sense of \cite{C}. 

Lin and Xi showed that the ADR algebras of semilocal modules are quasi-hereditary (see \cite[Theorem]{LX}).
In this section, we refine this result.

\begin{theorem}\label{thm1}
The ADR algebra of any semilocal module is left-strongly quasi-hereditary.
\end{theorem}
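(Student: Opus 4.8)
The plan is to prove that the ADR algebra $\End_A(\widetilde{M})$ is left-strongly quasi-hereditary by exhibiting a total left rejective chain on $\add \widetilde{M}$ and invoking Proposition \ref{thm0}. Since $\widetilde{M}$ is the basic module of $\bigoplus_{i=1}^m M/MJ(A)^i$, the indecomposable summands of $\widetilde{M}$ are precisely the local modules appearing as direct summands of the various $M/MJ(A)^i$. The natural chain to consider is the one indexed by Loewy length: for $1 \le i \le m$, let $\mathcal{C}_i := \add\bigl(\bigoplus_{j \le m-i} M/MJ(A)^j\bigr)$, so that $\mathcal{C}_0 = \add \widetilde{M}$ and each step removes the summands of maximal remaining Loewy length. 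First I would make this filtration precise, checking that $\mathcal{C}_{i-1} \supset \mathcal{C}_i$ strictly and that $\mathcal{C}_n = 0$ for $n = m$.

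**The two conditions to verify.**

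To establish that this is an $A$-total left rejective chain (which, since $\mathrm{D}A \in \add\widetilde M$ need not hold but $\add\widetilde M$ is a subcategory of $\mod A$, gives a total left rejective chain directly), I would verify the two defining conditions at each step $i$. For condition (a), given any $X \in \mathcal{C}_{i-1}$, I must produce an \emph{epic} left $\mathcal{C}_i$-approximation of $X$ in $\mod A$. The candidate is the canonical surjection $X \to X/XJ(A)^{m-i}$ followed by the observation that $X/XJ(A)^{m-i}$ lies in $\mathcal{C}_i$: indeed, for a local module $L$ of Loewy length $m-i+1$, the quotient $L/LJ(A)^{m-i}$ is again local of smaller Loewy length, hence a summand of some $M/MJ(A)^{m-i}$. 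The key point is that this radical-power quotient is functorial and the induced map on Hom-spaces realizes the approximation property; this is essentially the computation Lin--Xi and Conde carry out. For condition (b), I would use Proposition \ref{crrs}: for each indecomposable $X \in \ind \mathcal{C}_{i-1} \setminus \ind \mathcal{C}_i$ (a local module of Loewy length exactly $m-i+1$), the map $\varphi \colon X \to X/XJ(A)^{m-i}$ should induce an isomorphism $\mathcal{C}_{i-1}(X/XJ(A)^{m-i}, -) \xrightarrow{-\circ\varphi} \mathcal{J}_{\mathcal{C}_{i-1}}(X, -)$, which is the cosemisimplicity of $\mathcal{C}_i$ in $\mathcal{C}_{i-1}$.

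**The main obstacle.**

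I expect the crux to be verifying condition (b), i.e.\ the isomorphism of radical Hom-functors in Proposition \ref{crrs}. One must show two things: every radical morphism $X \to N$ with $N \in \mathcal{C}_{i-1}$ factors through $\varphi$ (so that $-\circ\varphi$ is surjective onto the radical), and that $\varphi$ is not itself split mono so nothing is lost (injectivity). The surjectivity amounts to proving that any non-isomorphism out of the top-Loewy-length local module $X$ must kill the socle layer $XJ(A)^{m-i}$, which requires a careful analysis of how morphisms between local modules of different Loewy lengths behave—precisely the semilocal structure is what makes the top of $X$ simple and forces radical maps to land in a proper radical power of the target. The semilocality hypothesis is doing real work here, and I would isolate a lemma describing $\mathcal{J}_{\add\widetilde M}(X,N)$ for local $X,N$ before assembling the chain. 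Once both conditions are in hand for every $i$, Proposition \ref{thm0} yields that $B = \End_A(\widetilde M)$ is left-strongly quasi-hereditary, completing the proof.
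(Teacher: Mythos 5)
Your overall strategy (build a total left rejective chain on $\add\widetilde{M}$ and invoke Proposition \ref{thm0}) is the same as the paper's, but the specific chain you propose --- stratifying the indecomposable summands purely by Loewy length, $\mathcal{C}_i=\add\bigl(\bigoplus_{j\le m-i}M/MJ(A)^j\bigr)$ --- does not work in general, and this is exactly where the semilocal case differs from the classical case $M=A$. The problem is in your verification of cosemisimplicity (your ``condition (b)''): you claim that every radical morphism out of a local summand $X$ of maximal remaining Loewy length must kill $XJ(A)^{m-i}$ and hence factor through $X/XJ(A)^{m-i}$. This is false when the target $N$ is another local summand of the \emph{same} Loewy length with $\top N\cong\top X$: a radical morphism $X\to N$ can then be surjective, in which case its image has full Loewy length and it cannot factor through any module of smaller Loewy length. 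Concretely, in the paper's Example \ref{eg}, where $M=P(1)\oplus P(1)/S(3)\oplus P(1)/S(4)\oplus P(2)/S(3)$, the canonical surjection $P(1)\to P(1)/S(3)$ is a radical map between two summands of $\widetilde{M}$ of the same maximal Loewy length that does not factor through $P(1)/P(1)J(A)^2$. Hence $\mathcal{J}_{\mathcal{C}_0/[\mathcal{C}_1]}\neq 0$ for your chain, condition (b) of Definition \ref{rejch} fails already at the first step, and indeed the paper records that this ADR algebra is \emph{not} left-strongly quasi-hereditary with respect to the length order.

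The missing idea is a finer filtration within each Loewy-length stratum: the paper splits $\mathsf{F}_i$ into subsets $\mathsf{F}_{i,1},\dots,\mathsf{F}_{i,n_i}$ by repeatedly peeling off those modules that admit no surjective radical map to the remaining modules of the same Loewy length, and the chain removes one $\mathsf{F}_{i,j}$ at a time. For $M'\in\mathsf{F}_{0,1}$ this extra hypothesis is precisely what excludes the surjective radical maps above, and Lemma \ref{lem2} then shows that every radical map out of $M'$ kills $M'J(A)^{m-1}$, so the canonical surjection $M'\to M'/M'J(A)^{m-1}$ realizes the isomorphism required by Proposition \ref{crrs}; the rest follows by induction on $n_M$. (For $M=A$ one has $n_i=1$ for all $i$, which is why the length order suffices for the original ADR algebra but not here.) Your condition (a) --- the epic left approximation $X\to X/XJ(A)^{m-i}$ --- is unproblematic; it is only the cosemisimplicity step that forces the refined chain.
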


Observe that Theorem \ref{thm1} gives a better upper bound for global dimension of ADR algebras (see Remark \ref{rem}).

In the following, we give a proof of Theorem \ref{thm1}.
Let $\mathsf{F}$ be the set of pairwise non-isomorphic indecomposable direct summands of $\widetilde{M}$ and $\mathsf{F}_i$ the subset of $\mathsf{F}$ consisting of all modules with Loewy length $m-i$.
We denote by $\mathsf{F}_{i, 1}$ the subset of $\mathsf{F}_i$ consisting of all modules $X$ which do not have a surjective map in $\mathcal{J}_{\mod A}(X,N)$ for all modules $N$ in $\mathsf{F}_i$. For any integer $j>1$, we inductively define the subsets $\mathsf{F}_{i,j}$ of $\mathsf{F}_{i}$ as follows: $\mathsf{F}_{i, j}$ consists of all modules $X\in\mathsf{F}_{i}\setminus \bigcup_{1 \leq k \leq j-1}\mathsf{F}_{i,k}$ which do not have a surjective map in $\mathcal{J}_{\mod A}(X,N)$ for all modules $N\in\mathsf{F}_{i}\setminus \bigcup_{1 \leq k \leq j-1}\mathsf{F}_{i,k}$. 
We set $n_{i}:=\min\{j\mid \mathsf{F}_{i}=\bigcup_{1 \leq k \leq j}\mathsf{F}_{i,k}\}$ and $n_{M}:=\sum_{i=0}^{m-1} n_i$. 
For $0 \leq i \leq m-1$ and $1 \leq j \leq n_i$, we set
\begin{align}
\mathsf{F}_{>(i,j)}&:=\mathsf{F}\setminus ((\cup_{-1\le k \le i-1}\mathsf{F}_{k})\cup (\cup_{1\le l \le j}\mathsf{F}_{i,l})),\notag \\
\mathcal{C}_{i,j} &:= \add \bigoplus_{N \in \mathsf{F}_{> (i,j)}} N, \notag
\end{align}
where $\mathsf{F}_{-1}:=\emptyset$. 

Now, we display an example to explain how the subsets $\mathsf{F}_{i, j}$ are given.

\begin{example} \label{eg}
Let $A$ be the $K$-algebra defined by the quiver
\[
\xymatrix@=15pt{ 1 \ar[r] & 2 \ar[r] \ar [d] & 3 \\
 &  4
 }
 \]
and $M:= P(1) \oplus P(1)/S(3) \oplus P(1)/S(4) \oplus P(2)/S(3)$.
We can easily check that $M$ is a semilocal module.
The ADR algebra $B$ of $M$ is given by the quiver
\[
\xymatrix@=15pt{ P(1)/S(4) \ar[r]^{a} & P(1) & P(1)/S(3) \ar[l]_{b} \ar[rd]^{c} \\
 & P(1)/ P(1) J(A)^2 \ar[lu]^{d} \ar[ru]_{e} \ar[rd]_{f} & &P(2)/ S(3) \\
 & S(1) \ar[u]^{g} & S(2) \ar [ru]_{h}
 }
 \]
with relations $da-eb, ec-fh$ and $gf$.
Then $\mathsf{F}_{0,1}=\{ P(1)/S(4), P(1)/S(3) \}$, $\mathsf{F}_{0,2}=\{ P(1) \}$, $\mathsf{F}_{1,1}=\{ P(1)/P(1)J(A)^2, P(2)/S(3) \}$, $\mathsf{F}_{2,1} = \{ S(1), S(2) \}$. 
\end{example}

To prove Theorem \ref{thm1}, we first show the following proposition. 

\begin{proposition} \label{prop}
Let $A$ be an artin algebra and $M$ a semilocal $A$-module.
Then $\add \widetilde{M}$ has the following $A$-total left rejective chain with length $n_{M}$.
\begin{equation*}
\add \widetilde{M} =:\mathcal{C}_{0,0}\supset \mathcal{C}_{0,1} \supset \cdots \supset \mathcal{C}_{0,n_0} \supset \mathcal{C}_{1,1} \supset \cdots \supset \mathcal{C}_{m-1,n_{m-1}}=0.
\end{equation*} 
\end{proposition}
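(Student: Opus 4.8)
The plan is to check, at each inclusion of the displayed chain, the two defining properties of an $A$-total left rejective chain: that every object of the larger category has an epic (in $\mod A$) left approximation by the smaller one, and that the smaller one is cosemisimple in the larger one. Writing $J:=J(A)$ and unwinding the definition of $\mathsf{F}_{>(i,j)}$, every consecutive inclusion has the form $\mathcal{C}_{i,j-1}\supset\mathcal{C}_{i,j}$, with the convention $\mathcal{C}_{i,0}:=\mathcal{C}_{i-1,n_{i-1}}$ (so that $\mathcal{C}_{0,0}=\add\widetilde M$), and it deletes exactly the indecomposables in $\mathsf{F}_{i,j}$, all of which are local of Loewy length $m-i$. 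So it suffices to analyse one deletion step. I would fix $X\in\mathsf{F}_{i,j}$, write $X\cong M_s/M_sJ^{\,m-i}$ for a local summand $M_s$ of $M$ with $\ell\ell(M_s)\ge m-i$, and take $f^X\colon X\to X/XJ^{\,m-i-1}=M_s/M_sJ^{\,m-i-1}$ to be the canonical surjection; its target has Loewy length $m-i-1$, hence lies in $\mathsf{F}_{i+1}\subseteq\mathsf{F}_{>(i,j)}$ and so in $\mathcal{C}_{i,j}$ (for $i=m-1$ the target is $0$).

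The technical heart, which I expect to be the main obstacle, is the claim that \emph{every radical morphism $g\colon X\to Z$ with $Z\in\mathcal{C}_{i,j-1}$ indecomposable has $\ell\ell(\im g)\le m-i-1$, and hence factors through $f^X$}. I would argue by cases on $\ell\ell(Z)$. If $\ell\ell(Z)<m-i$ this is immediate. If $\ell\ell(Z)=m-i$, then $Z$ belongs to $\mathsf{F}_i$ in some layer $\ge j$; now the point is purely local-module geometry together with the layering: a non-surjective map into the local module $Z$ has image inside $ZJ$ (by Nakayama), so of Loewy length $\le m-i-1$, whereas a surjective radical map $X\to Z$ cannot occur, because the definition of $\mathsf{F}_{i,j}$ forbids surjective maps in $\mathcal{J}_{\mod A}(X,-)$ onto objects of $\mathsf{F}_i$ lying in layers $\ge j$ (radical endomorphisms of $X$, and maps to other objects of the same layer $j$, are covered by the same two remarks). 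Granting the claim, $g(XJ^{\,m-i-1})=(\im g)J^{\,m-i-1}=0$, so $XJ^{\,m-i-1}\subseteq\ker g$ and $g$ factors through $f^X$.

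Both conditions then follow. For (a): any morphism $X\to Z$ with $Z\in\mathcal{C}_{i,j}$ is radical, since $X$ is indecomposable and not a summand of $Z$, so it factors through $f^X$; thus $f^X$ is a left $\mathcal{C}_{i,j}$-approximation, and it is epic in $\mod A$ by construction. For (b): the claim shows that every radical morphism between objects of $\mathsf{F}_{i,j}$, including every radical endomorphism, factors through the target of $f^X$, which lies in $\mathcal{C}_{i,j}$; hence in $\mathcal{C}_{i,j-1}/[\mathcal{C}_{i,j}]$ all such morphisms vanish and every surviving endomorphism ring collapses to a division ring, giving $\mathcal{J}_{\mathcal{C}_{i,j-1}/[\mathcal{C}_{i,j}]}=0$, i.e.\ cosemisimplicity. (Equivalently, once $f^X$ is an epic left approximation, cosemisimplicity is the radical-isomorphism criterion of Proposition \ref{crrs}: the map $-\circ f^X$ onto $\mathcal{J}_{\mathcal{C}_{i,j-1}}(X,-)$ is surjective by the approximation property and injective because $f^X$ is epic.)

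It remains to record the bookkeeping. A surjective radical map strictly decreases composition length, so objects of minimal length in $\mathsf{F}_i\setminus\bigcup_{k<j}\mathsf{F}_{i,k}$ admit none; hence each $\mathsf{F}_{i,j}$ with $1\le j\le n_i$ is nonempty and every inclusion in the chain is strict. The endpoints are $\mathcal{C}_{0,0}=\add\widetilde M$ and $\mathcal{C}_{m-1,n_{m-1}}=0$, and the number of deletion steps is $\sum_{i=0}^{m-1}n_i=n_M$. Combining this with the verification of (a) and (b) above yields the asserted $A$-total left rejective chain of length $n_M$.
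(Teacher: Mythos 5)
Your argument is correct and follows essentially the same route as the paper: your key claim (radical maps out of $X\in\mathsf{F}_{i,j}$ kill $XJ(A)^{m-i-1}$, because surjections onto objects remaining in the same Loewy-length stratum are excluded by the construction of the layers, while all other images either land in the radical of a local module or in a module of Loewy length at most $m-i-1$) is precisely Lemma \ref{lem2} extended from the top layer to an arbitrary layer. The only difference is organizational: the paper proves the factorization only for $\mathsf{F}_{0,1}$ and then inducts on $n_M$ by replacing $M$ with the smaller semilocal module $M/(\oplus_{X\in\mathsf{F}_{0,1}}X)\oplus(\oplus_{X\in\mathsf{F}_{0,1}}X/XJ(A)^{m-1})$, whereas you verify every consecutive inclusion of the chain directly.
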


To show Proposition \ref{prop}, we need the following lemma.

\begin{lemma} \label{lem2}
For any $M' \in \mathsf{F}_{0,1}$, the canonical surjection $\rho : M' \twoheadrightarrow M'/M' J(A)^{m-1}$ induces an isomorphism
\begin{equation*}
\varphi : \Hom_A (M'/M' J(A)^{m-1}, \widetilde{M}) \xrightarrow{- \circ \rho} \mathcal{J}_{\mod A}(M', \widetilde{M}).
\end{equation*}
\end{lemma}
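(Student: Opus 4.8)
The plan is to analyze the map $\varphi = -\circ\rho$ directly, proving injectivity and surjectivity separately after first verifying that its image actually lands in the radical. Let me write $M' \in \mathsf{F}_{0,1}$, so $M'$ has Loewy length $m = \ell\ell(M)$; thus $M'J(A)^{m-1}\neq 0$ while $M'J(A)^m = 0$, and $M'/M'J(A)^{m-1}$ has Loewy length $m-1$, hence lies in $\mathsf{F}$ (it is one of the summands of some $M/MJ(A)^{m-1}$). The first thing I would confirm is that $\varphi$ is well-defined into $\mathcal{J}_{\mod A}(M',\widetilde{M})$: since $\rho$ is not a split epimorphism (its kernel $M'J(A)^{m-1}$ is nonzero and $M'$ is local, hence indecomposable), any composite $f\circ\rho$ with $f\colon M'/M'J(A)^{m-1}\to\widetilde{M}$ is a non-isomorphism on each indecomposable summand, so it lies in the radical. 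The key structural input is that $M'\in\mathsf{F}_{0,1}$ means $M'$ admits \emph{no} surjective radical map to any module of Loewy length $m$ in $\mathsf{F}$; combined with $M'$ being local (so its top is simple), this forces every radical map out of $M'$ to factor through the quotient by $M'J(A)^{m-1}$.

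For injectivity, I would use that $\rho$ is an epimorphism: if $f\circ\rho = g\circ\rho$ then $f = g$ on the image of $\rho$, which is all of $M'/M'J(A)^{m-1}$, so $f=g$. This part is immediate and carries no real content. The substance is surjectivity, which is where I expect the main obstacle to lie. Given an arbitrary radical map $h\in\mathcal{J}_{\mod A}(M',\widetilde{M})$, I must produce $f\colon M'/M'J(A)^{m-1}\to\widetilde{M}$ with $f\circ\rho = h$, and by the universal property of the cokernel this is equivalent to showing $M'J(A)^{m-1}\subseteq\ker h$. It suffices to treat the case where $h$ maps into a single indecomposable summand $N$ of $\widetilde{M}$, i.e. $N\in\mathsf{F}$. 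The argument splits according to the Loewy length of $N$.

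The heart of the matter is the following dichotomy. If $\ell\ell(N)\le m-1$, then $N J(A)^{m-1} = 0$, and since $h(M'J(A)^{m-1})\subseteq NJ(A)^{m-1} = 0$ (as any module homomorphism sends $M'J(A)^{m-1}$ into $NJ(A)^{m-1}$), we immediately get $M'J(A)^{m-1}\subseteq\ker h$, as needed. The genuinely delicate case is $\ell\ell(N) = m$, that is $N\in\mathsf{F}_0$; here I must use the defining property of $\mathsf{F}_{0,1}$ to rule out the map $h$ detecting the bottom layer $M'J(A)^{m-1}$. Concretely, a radical map $h\colon M'\to N$ with $N$ of Loewy length $m$ cannot be surjective, by the very definition of $\mathsf{F}_{0,1}$; since $M'$ is local with simple top and $h$ is a non-surjective radical map, its image has strictly smaller Loewy length than $m$, so $\im h\subseteq NJ(A)$ need not suffice by itself, and the careful point is to show $\im h$ actually has Loewy length at most $m-1$, whence $h$ kills $M'J(A)^{m-1}$ (the element-wise argument: $h(M'J(A)^{m-1}) = h(M')J(A)^{m-1} = (\im h)J(A)^{m-1} = 0$ once $\ell\ell(\im h)\le m-1$). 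Establishing $\ell\ell(\im h)\le m-1$ from non-surjectivity of every radical map out of $M'$ onto Loewy-length-$m$ modules is the crux, and I would prove it by contradiction: if $\im h$ had Loewy length $m$, then since $M'$ is local, $\im h$ would be a local module of Loewy length $m$, a quotient of $M'$; I would then need to realize $\im h$ (or a module containing it) inside $\mathsf{F}_0$ to contradict the assumption that $M'\in\mathsf{F}_{0,1}$ admits no surjective radical map onto a Loewy-length-$m$ member of $\mathsf{F}$. Once surjectivity is secured in both cases, assembling the summand-wise statements gives the full isomorphism $\varphi$.
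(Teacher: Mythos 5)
Your overall strategy is the same as the paper's: reduce surjectivity of $\varphi$ to showing that every radical map $h\colon M'\to N$ into an indecomposable summand $N$ of $\widetilde{M}$ kills $M'J(A)^{m-1}$, split into the cases $\ell\ell(N)\le m-1$ (where $NJ(A)^{m-1}=0$ settles it) and $\ell\ell(N)=m$ (where the defining property of $\mathsf{F}_{0,1}$ is invoked), and the well-definedness and injectivity observations are fine. The case $\ell\ell(N)\le m-1$ and the appeal to $\mathsf{F}_{0,1}$ for non-surjectivity when $\ell\ell(N)=m$ both match the paper.

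However, your handling of the crux in the case $\ell\ell(N)=m$ is garbled and, as written, the sub-argument you propose would fail. You assert that $\im h\subseteq NJ(A)$ ``need not suffice by itself'' and that one must separately establish $\ell\ell(\im h)\le m-1$, which you then propose to prove by contradiction by realizing $\im h$ inside $\mathsf{F}_0$. That last step is not available: $\im h$ is a submodule of $N$, and the definition of $\mathsf{F}_{0,1}$ only excludes surjective radical maps onto members of $\mathsf{F}_0$, not onto arbitrary local modules of Loewy length $m$; there is no reason $\im h$ should be isomorphic to anything in $\mathsf{F}$. Moreover the detour is unnecessary, because $\im h\subseteq NJ(A)$ \emph{does} suffice: since $\ell\ell(N)=m$ one has $(NJ(A))J(A)^{m-1}=NJ(A)^m=0$, hence $h(M'J(A)^{m-1})=h(M')J(A)^{m-1}\subseteq (NJ(A))J(A)^{m-1}=0$. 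And $\im h\subseteq NJ(A)$ follows in one line from the non-surjectivity you already established: $N$ is local, so $NJ(A)$ is its unique maximal submodule and every proper submodule, in particular the image of a non-surjective map, lies inside it. With that correction your argument closes and coincides with the proof in the paper.
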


\begin{proof}
Since $\varphi$ is a well-defined injective map, we show that $\varphi$ is surjective.
Let $N$ be an indecomposable summand of $\widetilde{M}$ with Loewy length $k$ and let $f:M' \to N$ be any morphism in $\mathcal{J}_{\mod A}(M', N)$.
Then we show $f(M' J(A)^{m-1})=0$.

(i) Assume that $\top M' \not \cong \top N$ or $k=m$.
Then we have $\im f\subset NJ(A)$, and hence
\begin{align}
f(M' J(A)^{m-1})=f(M')J(A)^{m-1}\subset (NJ(A))J(A)^{m-1}=0. \notag
\end{align}

(ii) Assume that $\top M' \cong \top N$ and $k<m$.
Since $m-k>0$ holds, we obtain 
\begin{align}
f(M' J(A)^{m-1})=f(M')J(A)^{m-1}\subset NJ(A)^{m-1}= (NJ(A)^{k})J(A)^{m-k-1}=0. \notag
\end{align}

Since $f(M' J(A)^{m-1})=0$ holds, there exists $g: M' /M' J(A)^{m-1}\to N$ such that $f=g \circ \rho$.
\begin{align}
\xymatrix{
0\ar[r]&M' J(A)^{m-1}\ar[r]\ar[rd]_{0}&M' \ar[r]^{\rho\hspace{10mm}}\ar[d]^{f}&M' /M' J(A)^{m-1}\ar[r]\ar@{-->}[dl]^{\exists{g}}&0\\
&&N&&
}\notag
\end{align}
Hence the assertion follows.
\end{proof}

Now, we are ready to prove Proposition \ref{prop}.

\begin{proof}[Proof of Proposition \ref{prop}]
We show by induction on $n_{M}$.
If $n_{M}=1$, then this is clear.
Assume that $n_{M} >1$. 
By Proposition \ref{crrs} and Lemma \ref{lem2}, $\mathcal{C}_{0,1}$ is a cosemisimple left rejective subcategory of $\add \widetilde{M}$.
Since $N:=M/(\oplus_{X \in \mathsf{F}_{0,1}} X) \oplus (\oplus_{X \in \mathsf{F}_{0,1}} X/ X J(A)^{m-1})$ is a semilocal module satisfying $\widetilde{N}=\widetilde{M}/\oplus_{X\in\mathsf{F}_{0,1}}X$ and $n_{N}<n_{M}$, we obtain that 
\begin{align}
\add \widetilde{N}= \mathcal{C}_{0,1} \supset \cdots \supset \mathcal{C}_{0,n_{0}}\supset \mathcal{C}_{1,1} \supset \cdots \supset \mathcal{C}_{m-1, n_{m-1}} =0\notag
\end{align}
is an $A$-total left rejective chain by induction hypothesis.
By composing $\mathcal{C}_{0,0} \supset \mathcal{C}_{0,1}$ and it, we have the desired $A$-total left rejective chain.
\end{proof}

\begin{proof}[Proof of Theorem \ref{thm1}]
By Proposition \ref{thm0}, it is enough to show that $\add \widetilde{M}$ has a total left rejective chain.
Hence the assertion follows from Proposition \ref{prop}.
\end{proof}

We give some remark on partial orders for left-strongly quasi-hereditary algebras
\begin{remark}
We define two partial orders on the isomorphism classes of simple $B$-modules.
One is $\{ \mathsf{F}_{0,1} < \cdots < \mathsf{F}_{0,n_0} < \mathsf{F}_{1,1} < \cdots < \mathsf{F}_{m-1, n_{m-1}} \}$, called the {\it ADR order}. 
Another one is $\{ \mathsf{F}_{0} < \mathsf{F}_{1} < \cdots < \mathsf{F}_{m-1} \}$, called the {\it length order}.

By Proposition \ref{prop}, ADR algebras of semilocal modules are left-strongly quasi-hereditary with respect to the ADR order.
On the other hand, Conde shows that original ADR algebras are left-strongly quasi-hereditary with respect to the length order \cite{C}.
Since, for an original ADR algebra, the length order coincides with the ADR order, we can recover Conde's result.
However, the ADR algebra of a semilocal module is not necessarily left-strongly quasi-hereditary with respect to the length order, as shown by the following example. 
\end{remark}

\begin{example}
Let $A$ and $M$ be in Example \ref{eg}. 
Then we can check that the ADR algebra $B$ of $M$ is left-strongly quasi-hereditary with respect to the ADR order 
\begin{equation*}
\{ \mathsf{F}_{0,1} < \mathsf{F}_{0,2} < \mathsf{F}_{1,1} < \mathsf{F}_{2,1} \}.
\end{equation*}
However, we can also check that $B$ is not left-strongly quasi-hereditary with respect to the length order 
\begin{align}
\{ \{P(1)/S(3), P(1)/S(4), P(1)\} < \{P(1)/P(1)J(A)^2, P(2)/S(3)\} < \{S(1), S(2)\} \}.\notag
\end{align}
\end{example}

As an application, we give an upper bound for global dimension of ADR algebras.

\begin{corollary} \label{cor}
Let $A$ be an artin algebra and $M$ a semilocal $A$-module.
Then 
\begin{align}
\gl \End_{A}(\widetilde{M})\le n_{M}. \notag
\end{align}
\end{corollary}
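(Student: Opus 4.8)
The plan is to apply Proposition~\ref{iygl} directly. That proposition states that whenever $\add M$ possesses an $A$-total left rejective chain of positive length $n$, one has $\gl \End_A(M) \le n$. Since the object of interest here is $\End_A(\widetilde{M})$, the natural move is to feed the module $\widetilde{M}$ into Proposition~\ref{iygl} rather than $M$ itself, so that the relevant category is $\add \widetilde{M}$.

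First I would invoke Proposition~\ref{prop}, which exhibits an explicit $A$-total left rejective chain
\begin{equation*}
\add \widetilde{M} =\mathcal{C}_{0,0}\supset \mathcal{C}_{0,1} \supset \cdots \supset \mathcal{C}_{m-1,n_{m-1}}=0
\end{equation*}
of length $n_{M}$. The only genuine hypothesis needed to conclude is that this length is strictly positive, i.e.\ $n_{M}>0$. This holds because $n_{M}=\sum_{i=0}^{m-1} n_i$ and each $n_i \ge 1$ by construction (each $\mathsf{F}_i$ is exhausted by at least one nonempty layer $\mathsf{F}_{i,j}$ whenever $\mathsf{F}_i$ is itself nonempty, and $\mathsf{F}_0$ is always nonempty since $\widetilde{M}$ has indecomposable summands of maximal Loewy length $m$). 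With positivity of the length secured, applying Proposition~\ref{iygl} to the module $\widetilde{M}$ yields $\gl \End_A(\widetilde{M}) \le n_{M}$, which is exactly the claimed inequality.

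There is essentially no obstacle here, as this corollary is a formal consequence of the machinery already assembled: the combinatorial work lies entirely in Proposition~\ref{prop}, where the rejective chain and its length $n_{M}$ are constructed, and the global-dimension bound is then read off from the general principle in Proposition~\ref{iygl}. The only point warranting a line of care is confirming $n_{M}>0$ so that the positive-length hypothesis of Proposition~\ref{iygl} is met; this is immediate from the definition of $n_{M}$ as a sum of positive integers $n_i$.
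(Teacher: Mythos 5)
Your proposal is correct and follows exactly the paper's own argument: invoke Proposition~\ref{prop} to obtain the $A$-total left rejective chain of $\add\widetilde{M}$ of length $n_M$, then read off the bound from Proposition~\ref{iygl}. The extra remark confirming $n_M>0$ is a harmless bit of diligence that the paper leaves implicit.
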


\begin{proof}
By Proposition \ref{prop}, $\add\widetilde{M}$ has an $A$-total left rejective chain with length $n_{M}$.
Hence the assertion follows from Proposition \ref{iygl}.
\end{proof}

\begin{remark} \label{rem}
In \cite{LX}, they showed that the ADR algebra of a semilocal module $M$ is quasi-hereditary.
This implies $\gl \End_{A}(\widetilde{M})\le 2(n_{M}-1)$ by \cite[Statement 9]{DR}.
By Corollary \ref{cor}, we can obtain a better upper bound for global dimension of ADR algebras. 
This can be seen by the following example.
\end{remark}

The following example tells us that the upper bound for the global dimension in Corollary \ref{cor} is tightly.

Let $n \geq 2$. Let $A$ be the $K$-algebra defined by the quiver
\[
\def\objectstyle{\scriptstyle}
\def\labelstyle{\scriptstyle}
\vcenter{
\hbox{
$
\xymatrix@C=6pt{
& & 1 \ar[lld] \ar[ld] \ar@{}[d]|(.6){\dots} \ar[rd] \ar[rrd]
& & \\
2 & 3 & \dots\dots & n-1 & n
}
$
}
}
\] 
and $M$ a direct sum of all factor modules of $P(1)$. Clearly, $M$ is semilocal and $n_M=n$. 
Let $B$ be its ADR algebra. Then we have
\begin{align*}
\gl B = 
\begin{cases}
n-1 & (n \geq 3)\\
2 & (n=2).
\end{cases}
\end{align*}
Indeed, the assertion for $n=2$ clearly holds.
Assume $n \geq 3$.
It is easy to check that, for $X \in \mathsf{F}_{0,l}$ ($1 \leq l \leq n_0$), 
\begin{align*}
\pd_{B^{\op}} \top (\Hom_A(X, \widetilde{M})) =l. \notag
\end{align*}
Thus we have 
\begin{align*}
\max \{ \pd_{B^{\op}} \top (\Hom_A(X, \widetilde{M})) \mid X \in \mathsf{F} \} = n_0 = n_M -1. \notag
\end{align*}
Hence the assertion for $n \geq 3$ holds.

\section{Strongly quasi-hereditary ADR algebras} \label{SQHADR}

In this section, we prove Theorem \ref{thmb}. 
We keep the notation of the previous section.
Throughout this section, $A$ is an artin algebra with Loewy length $m$ and $B:=\End_{A}(\widetilde{A})$ the ADR algebra of $A$.
Then $n_j =1$ holds for any $0 \leq j \leq m-1$. 
Hence we obtain the following $A$-total left rejective chain by Proposition \ref{prop}.
\begin{equation} \label{rej}
\add \widetilde{A}  \supset \mathcal{C}_{0,1} \supset \mathcal{C}_{1,1} \supset \cdots \supset \mathcal{C}_{m-1,1} = 0.
\end{equation}
Note that if $m=1$, then $B$ is semisimple.
Hence we always assume $m\ge 2$ in the rest of section.

\begin{theorem}\label{thm2}
Let $A$ be an artin algebra with Loewy length $m \geq 2$ and $B$ the ADR algebra of $A$.
Then the following statements are equivalent.
\begin{itemize}
\item[{\rm (i)}] $B$ is a strongly quasi-hereditary algebra. 
\item[{\rm (ii)}] The chain \eqref{rej} is a rejective chain of $\add \widetilde{A}$.
\item[{\rm (iii)}] $\gl B =2$.
\item[{\rm (iv)}] $J(A) \in \add \widetilde{A}$. 
\end{itemize}
\end{theorem}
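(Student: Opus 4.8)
The plan is to prove Theorem \ref{thm2} by establishing a cycle of implications. The natural route is (iv)$\Rightarrow$(ii)$\Rightarrow$(i)$\Rightarrow$(iii)$\Rightarrow$(iv), since (ii) is the technical statement that directly feeds the rejective-chain machinery of Section 1, and (i) $\Rightarrow$ (iii) is already essentially recorded in the excerpt via \cite[Proposition A.2]{R}. The heart of the argument lies in the equivalence of the combinatorial condition (iv) with the \emph{right}-rejectivity that upgrades the chain \eqref{rej} from merely total left rejective (which holds unconditionally by Proposition \ref{prop}) to genuinely rejective.

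First I would treat (iv)$\Rightarrow$(ii). By Proposition \ref{prop}, the chain \eqref{rej} is already an $A$-total left rejective chain, so each $\mathcal{C}_{i,1}$ is a cosemisimple left rejective subcategory of $\mathcal{C}_{i-1,1}$; to get a rejective chain I need only produce, for each step, monic right $\mathcal{C}_{i,1}$-approximations, and by Proposition \ref{crrs} it suffices to find for each indecomposable $X \in \mathcal{C}_{i-1,1}$ not in $\mathcal{C}_{i,1}$ a morphism $\varphi\colon Y \to X$ with $Y \in \mathcal{C}_{i,1}$ inducing an isomorphism $\mathcal{C}(-,Y) \to \mathcal{J}_{\mod A}(-,X)$. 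The indecomposable summands of $\widetilde{A}$ not in $\mathcal{C}_{0,1}$ are exactly the $P(i)$ (the Loewy length $m$ summands, i.e.\ $\mathsf{F}_0$), and for these the relevant datum is the radical inclusion $J(A) \hookrightarrow A$, more precisely $P(i)J(A) \hookrightarrow P(i)$. The condition $J(A) \in \add\widetilde{A}$ guarantees that each $P(i)J(A)$ lies in $\mathcal{C}_{0,1}$, which is precisely what makes this inclusion a right $\mathcal{C}_{0,1}$-approximation. The analogous inclusions at higher radical layers then supply the right-rejectivity at each subsequent step, so the whole chain becomes rejective.

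For (ii)$\Rightarrow$(i): once \eqref{rej} is a rejective chain, $\add\widetilde{A}$ has a rejective chain, so by the last sentence of Proposition \ref{thm0}, $B = \End_A(\widetilde{A})$ is strongly quasi-hereditary. The implication (i)$\Rightarrow$(iii) combines \cite[Proposition A.2]{R} (strongly quasi-hereditary forces $\gl B \leq 2$) with the fact that $\gl B \geq 2$ here: since $m \geq 2$ the algebra $B$ is not hereditary (an ADR algebra of a non-semisimple algebra has global dimension strictly greater than one), pinning $\gl B = 2$.

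The main obstacle, and the genuinely new content, is the closing implication (iii)$\Rightarrow$(iv). Here I would argue contrapositively: assuming $J(A) \notin \add\widetilde{A}$, I must exhibit a $B$-module of projective dimension at least three. The natural candidate is tied to an indecomposable summand of $J(A)$ that fails to appear in $\widetilde{A}$; because $\widetilde{A}$ collects the radical \emph{quotients} $A/J^j$ rather than the radical \emph{submodules}, a summand of $J(A)$ with Loewy length $< m$ that is not itself such a quotient forces a minimal $\add\widetilde{A}$-resolution of $J(A)$ (as an approximation of the top layer) to be strictly longer than two steps. Concretely I would analyze the minimal right $\add\widetilde{A}$-approximation of $J(A)$ and show that its syzygies in $\mod A$, transported through $\Hom_A(\widetilde{A},-)$, produce a third nonzero term in a projective $B$-resolution; equivalently, the step of the chain \eqref{rej} corresponding to $\mathsf{F}_0$ fails right-rejectivity, so by Proposition \ref{iygl}-type bookkeeping the projective dimension over $B^{\op}$ of the relevant simple top jumps above $2$. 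Matching this against the length-$n_A = m$ bound and the strongly-quasi-hereditary obstruction is the delicate part, but it is exactly the failure of $J(A)$ to be $\add\widetilde{A}$-presented that obstructs the global dimension from collapsing to $2$.
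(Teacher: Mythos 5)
Your overall cycle (iv)$\Rightarrow$(ii)$\Rightarrow$(i)$\Rightarrow$(iii)$\Rightarrow$(iv) is reasonable, and three of the four implications essentially match the paper's argument. The genuine gap is in (iii)$\Rightarrow$(iv). The paper does not prove this implication itself: it quotes the equivalence (iii)$\Leftrightarrow$(iv) directly from Smal\o\ \cite[Proposition 2]{Sm}, which is precisely the statement that $\gl B=2$ if and only if $J(A)\in\add\widetilde{A}$ for this class of endomorphism rings. Your sketch proposes to reprove the hard direction by exhibiting, from an indecomposable summand of $J(A)$ not lying in $\add\widetilde{A}$, a $B$-module of projective dimension at least three, but the construction is never carried out: you do not identify the module, compute its syzygies, or show that the third term of a minimal projective resolution is nonzero. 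Moreover, the closing inference --- that failure of right-rejectivity at the $\mathsf{F}_0$ step forces the projective dimension ``above $2$'' by ``Proposition~\ref{iygl}-type bookkeeping'' --- is not valid as stated: Proposition~\ref{iygl} only converts the \emph{existence} of a rejective chain into an \emph{upper} bound on global dimension, and the non-existence of one particular rejective chain yields no lower bound. As written, this implication is a plan rather than a proof; you would need either to carry out the syzygy computation in full or, as the paper does, invoke Smal\o's theorem.

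Two smaller points. In (i)$\Rightarrow$(iii) you assert $\gl B\ge 2$ on the grounds that an ADR algebra of a non-semisimple algebra is not hereditary; this is true but requires an argument. The paper supplies one by showing that $\mathcal{J}_{\mod A}(\widetilde{A},S)$ cannot be a projective $B$-module for $S$ a simple $A$-module (a projective presentation would force an isomorphism $Y\cong S$ with $Y\in\add\widetilde{A}$, contradicting $\mathcal{J}_{\mod A}(\widetilde{A},S)\cong\Hom_A(\widetilde{A},S)$), whence $\top\Hom_A(\widetilde{A},S)$ has projective dimension exactly two. In (iv)$\Rightarrow$(ii), your phrase ``the analogous inclusions at higher radical layers'' conceals the point that the induction only works if $P(i)J(A)/P(i)J(A)^{j}$ again lies in $\add\widetilde{A}$; this is exactly the content of the paper's Lemma~\ref{lem3}, and it is a consequence of the hypothesis $J(A)\in\add\widetilde{A}$ rather than a formality.
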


To prove Theorem \ref{thm2}, we need the following lemma.

\begin{lemma}\label{lem3}
Let $A$ be an artin algebra.
If $P(i)J(A) \in \add \widetilde{A}$ for any $i \in I$, then $P(i)J(A)/P(i)J(A)^j \in \add \widetilde{A}$ for $1 \leq j \leq m$.
\end{lemma}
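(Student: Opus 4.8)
The plan is to prove the statement by descending induction on $j$, using the hypothesis that $P(i)J(A)\in\add\widetilde{A}$ for every $i\in I$ as the base of the argument, together with the fact that $\add\widetilde{A}$ is closed under direct summands and under passage to the quotients $X\mapsto X/XJ(A)^t$ that define $\widetilde{M}$. First I would record the key structural fact about $\widetilde{A}$: by construction $\widetilde{A}$ is the basic module of $\bigoplus_{t=1}^{m}A/J(A)^t$, so $\add\widetilde{A}$ contains every module of the form $X/XJ(A)^t$ whenever $X\in\add\widetilde{A}$, because $(X/XJ(A)^t)$ is a direct summand of a module obtained by applying the defining construction. In particular, if $P(i)J(A)\in\add\widetilde{A}$, then each quotient $P(i)J(A)/(P(i)J(A))J(A)^{t}$ also lies in $\add\widetilde{A}$.

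The main observation I would use is the identity of submodules $(P(i)J(A))J(A)^{t}=P(i)J(A)^{t+1}$, which holds since $J(A)^{a}J(A)^{b}=J(A)^{a+b}$ inside $A$. With $t=j-1$ this gives
\begin{equation*}
P(i)J(A)/P(i)J(A)^{j}=P(i)J(A)\big/(P(i)J(A))J(A)^{j-1},
\end{equation*}
which exhibits $P(i)J(A)/P(i)J(A)^{j}$ precisely as a quotient of the module $P(i)J(A)$ by a radical power of itself. Since $P(i)J(A)\in\add\widetilde{A}$ by hypothesis, the closure property from the previous paragraph immediately places this quotient in $\add\widetilde{A}$ for every $1\le j\le m$, and the cases $j=1$ (where the quotient is $P(i)J(A)$ itself) and large $j$ (where the quotient stabilizes since $P(i)J(A)^{j}=0$ for $j\ge m$) are handled uniformly.

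The step I expect to require the most care is justifying that $\add\widetilde{A}$ really is closed under the operation $X\mapsto X/XJ(A)^{t}$ when $X$ is an arbitrary summand of $\widetilde{A}$, rather than a summand of $A$ itself; one must check that applying the ADR construction to a module already of the form (quotient of a projective by a radical power) does not leave the additive closure $\add\widetilde{A}$. Concretely, if $X=P(i)J(A)$ is a summand of some $A/J(A)^{s}$ appearing in $\widetilde{A}$, I would verify that $X/XJ(A)^{t}$ is a summand of $A/J(A)^{s'}$ for an appropriate $s'\le m$, so that it occurs among the building blocks of $\widetilde{A}$. Once this closure is confirmed, the lemma follows directly from the radical-power identity above.
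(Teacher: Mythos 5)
Your argument is correct and follows essentially the same route as the paper: decompose $P(i)J(A)\in\add\widetilde{A}$ into indecomposable summands of the form $P(k)/P(k)J(A)^l$ and observe that the quotient $P(i)J(A)/P(i)J(A)^j = P(i)J(A)\big/(P(i)J(A))J(A)^{j-1}$ is again a direct sum of modules of this form, hence lies in $\add\widetilde{A}$. The ``descending induction on $j$'' you announce is never actually used and is not needed; the closure of $\add\widetilde{A}$ under $X\mapsto X/XJ(A)^t$, verified on indecomposable summands exactly as you sketch in your final paragraph, gives the statement for all $j$ at once.
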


\begin{proof}
Since $P(i)J(A) \in \add \widetilde{A}$, we have $P(i)J(A) \cong \displaystyle{\bigoplus_{k, l} P(k)/P(k)J(A)^l}$.
For simplicity, we write $P(i)J(A) \cong P(k)/P(k)J(A)^l$.
Then we have $P(i)J(A)/P(i)J(A)^j \cong (P(k)/P(k)J(A)^l)/(P(k)J(A)^j/P(k)J(A)^l) \cong P(k)/P(k)J(A)^j \in \add \widetilde{A}$.
\end{proof}

\begin{proof}[Proof of Theorem \ref{thm2}]
(ii) $\Rightarrow$ (i): The assertion follows from Proposition \ref{thm0}.

(i) $\Rightarrow$ (iii): It follows from \cite[Proposition A.2]{R} that the global dimension of $B$ is at most two.
It is enough to show that there exists a $B$-module such that its projective dimension is two.
Let $S$ be a simple $A$-module.
Then we have the following short exact sequence.
\begin{equation*}
0 \to \mathcal{J}_{\mod A}(\widetilde{A},S) \to \Hom_A (\widetilde{A},S) \to \top \Hom_A(\widetilde{A},S) \to 0. 
\end{equation*}
Assume that $\mathcal{J}_{\mod A}(\widetilde{A},S)$ is a projective right $B$-module.
Then there exists an $A$-module $Y \in \add \widetilde{A}$ such that $\mathcal{J}_{\mod A}(\widetilde{A},S) \cong \Hom_A(\widetilde{A},Y)$.
By $S \in \add \widetilde{A}$, there exists a non-zero morphism $f : Y \to S$ such that $\Hom_A(\widetilde{A},f) : \Hom_A(\widetilde{A}, Y) \to \Hom_A(\widetilde{A}, S)$ is an injective map.
Since the functor $\Hom_{A}(\widetilde{A},-)$ is faithful, $f$ is an injective map.
Hence $f$ is an isomorphism.
This is a contradiction since $\mathcal{J}_{\mod A}(\widetilde{A},S) \cong \Hom_A(\widetilde{A},S)$.
Therefore, we obtain the assertion.

(iii) $\Leftrightarrow$ (iv): This follows from \cite[Proposition 2]{Sm}. 

(iv) $\Rightarrow$ (ii): 
First, we show that $\mathcal{C}_{0,1}$ is a cosemisimple rejective subcategory of $\add \widetilde{A}$.
By Proposition \ref{prop}, it is enough to show that $\mathcal{C}_{0,1}$ is a right rejective subcategory of $\add \widetilde{A}$.
For any $X \in \ind(\add \widetilde{A}) \setminus \ind(\mathcal{C}_{0,1})$, there exists an inclusion map $\varphi : XJ(A) \hookrightarrow X$ with $XJ(A) \in \mathcal{C}_{0,1}$ by the condition (iv).
Since $X$ is a projective $A$-module such that its Loewy length coincides with the Loewy length of $A$, the map $\varphi$ induces an isomorphism
\begin{equation*}
\Hom_A(\widetilde{A}, XJ(A)) \xrightarrow{\varphi \circ -} \mathcal{J}_{\mod A}(\widetilde{A}, X).
\end{equation*}
It follows from Proposition \ref{crrs} that $\mathcal{C}_{0,1}$ is a cosemisimple right rejective subcategory of $\add \widetilde{A}$.
Hence we obtain that $\mathcal{C}_{0,1}$ is a cosemisimple rejective subcategory of $\add \widetilde{A}$.

Next, we prove that $\add \widetilde{A}$ has a rejective chain 
\begin{equation*}
\add \widetilde{A} \supset \mathcal{C}_{0,1} \supset \mathcal{C}_{1,1} \supset \cdots \supset \mathcal{C}_{m-1,1} = 0
\end{equation*}
by induction on $m$. 
If $m=2$, then the assertion holds.
Assume that $m \geq 3$.
Let $X \in \ind(\mathcal{C}_{0,1}) \setminus \ind(\mathcal{C}_{1,1})$.
Then $X=P(i)/P(i)J(A)^{m-1}$ for some $i \in I$ and we have
\begin{equation*}
(P(i)/P(i)J(A)^{m-1})J(A/J^{m-1}(A)) \cong P(i)J(A)/P(i)J(A)^{m-1}.
\end{equation*}
Since $P(i)J(A) \in \add \widetilde{A}$, we obtain $P(i)J(A)/P(i)J(A)^{m-1} \in \mathcal{C}_{0,1}$ by Lemma \ref{lem3}.
By induction hypothesis, $\mathcal{C}_{0,1}$ has the following rejective chain.
\begin{equation*}
\mathcal{C}_{0,1} \supset \mathcal{C}_{1,1} \supset \cdots \supset \mathcal{C}_{m-1,1} = 0. 
\end{equation*}
Composing it with $\add \widetilde{A} \supset \mathcal{C}_{0,1}$, we obtain a rejective chain of $\add \widetilde{A}$. 
\end{proof}

By Theorem \ref{thm2}(i) $\Rightarrow$ (ii), a strongly quasi-hereditary structure of the ADR algebra $B$ can be always realized by the ADR order. However, for a semilocal module, such an assertion does not necessarily hold. In fact, we give an example that the ADR algebra of a semilocal module is strongly quasi-hereditary but not strongly quasi-hereditary with respect to the ADR order.

\begin{example}
Let $A$ be the $K$-algebra defined by the quiver
\[
\xymatrix@=15pt{ 1 \ar@(lu,ld)_{\alpha} \ar[r]^{\beta} & 2
 }
 \]
 with relations $\alpha \beta$ and $\alpha^3$.
 Clearly, $M:= P(1) \oplus P(1)/\soc P(1) \oplus P(2)$ is a semilocal module.
 The ADR algebra $B$ of $M$ is given by the quiver
\[
\xymatrix@=15pt{ &  P(1) \ar@<-1.5ex>[ldd]_{a} \ar[rd]^{b} & \\
& P(1)/P(1)J(A)^2 \ar[u]^{c} \ar[d]_{d} &P(1)/\soc P(1) \ar[l]^{e} \\
P(2) &S(1) \ar[ru]_{f} &
}
 \]
 with relations $eca, fed$ and $cb-df$.
 Then $B$ is not strongly quasi-hereditary with respect to the ADR order $\{ \mathsf{F}_{0,1} < \mathsf{F}_{1,1} < \mathsf{F}_{1,2} < \mathsf{F}_{2,1} \}$, but $B$ is strongly quasi-hereditary with respect to $\{ P(1) < P(1)/P(1)J(A)^2 < P(1)/\soc P(1) < \{P(2), S(1) \} \}$.
\end{example}

\subsection*{Acknowledgment}
The author wishes to express her sincere gratitude to Takahide Adachi and Professor Osamu Iyama. 
The author thanks Teresa Conde and Aaron Chan for informing her about the reference \cite[Proposition 2]{Sm}, which greatly shorten her original proof.

\end{document}